\newcommand{\autorefcheckize}[1]{%
  \expandafter\let\csname @@\string#1\endcsname#1%
  \expandafter\DeclareRobustCommand\csname relax\string#1\endcsname[1]{%
    \csname @@\string#1\endcsname{##1}\wrtusdrf{##1}}%
  \expandafter\let\expandafter#1\csname relax\string#1\endcsname
}
\theoremstyle{plain}
\newtheorem{theorem}{Theorem}[section]
\newaliascnt{lem}{theorem}
\newtheorem{lem}[lem]{Lemma}
\newaliascnt{cor}{theorem}
\newaliascnt{prop}{theorem}
\theoremstyle{remark}
\newtheorem{rem}{Remark}[section]
\newtheorem*{claim}{Claim}
\theoremstyle{definition}
\newtheorem{eg}{Example}[section]
\numberwithin{equation}{section}
\newcommand{\abs}[1]{\left\lvert#1\right\rvert}
\newcommand{\set}[1]{\left\{#1\right\}}
\newcommand{\hin}[2]{\left\langle#1,#2\right\rangle}
\newcommand*{\To}{\longrightarrow}
\newcommand*{\Rmn}[1]{\uppercase\expandafter{\romannueral#1}}
\newcommand*{\dif}{\mathop{}\!\mathrm{d}}
\DeclareMathOperator{\trace}{tr}
\journal{}
\begin{document}

\begin{frontmatter}

\title{A new characterization of the Calabi torus  in the unit sphere \tnoteref{lsy}}

\author[whu1,whu2]{Yong Luo}
\ead{yongluo@whu.edu.cn}

\author[whu1,whu2]{Linlin Sun\corref{sll1}}
\ead{sunll@whu.edu.cn}

\author[xmu]{Jiabin Yin}
\ead{jiabinyin@126.com}

\tnotetext[lsy]{This work was partially supported by the NSF of China (Nos. 11501421, 11801420, 11771404) and the Youth Talent Training Program of Wuhan University. The second author thanks the Max Planck Institute for Mathematics in the Sciences for good working conditions when this work carried out.}

\address[whu1]{School of Mathematics and Statistics, Wuhan University, Wuhan 430072, China}
\address[whu2]{Hubei Key Laboratory of Computational Science, Wuhan University, Wuhan, 430072, China}

\address[xmu]{School of Mathematical Sciences, Xiamen University, Xiamen, 361005, China}

\cortext[sll1]{Corresponding author.}

\begin{abstract}
In this paper, we study the rigidity theorem of closed minimally immersed Legendrian submanifolds in the unit sphere. Utilizing the maximum principle, we obtain a new characterization of the Calabi torus in the unit sphere which is the minimal  Calabi product Legendrian immersion of a point and the totally geodesic Legendrian sphere. We also establish an optimal Simons' type integral inequality in terms of the second fundamental form of  three dimensional  closed minimal Legendrian submanifolds in the unit sphere. 

\end{abstract}

\begin{keyword}
minimal Legendrian submanifolds \sep pinching theorem \sep Calabi torus

 \MSC[2010] 53C24\sep 53C40

\end{keyword}

\end{frontmatter}


\section{Introduction}
Let $M$ be an $n$-dimensional closed minimally immersed submanifold in the unit sphere $\mathbb{S}^{n+m}$ of dimension $n+m$. Let $\mathbf{B}$ be the second fundamental form of this immersion.  
Simons \cite{Si}, Chern, Do Carmo and Kobayashi \cite{CCK}, Lawson \cite{La} proved that under the pinching condition   $\abs{\mathbf{B}}^2\leq\frac{n}{2-\frac{1}{m}}$,  $M$  must be either one of the  Clifford minimal tori $\mathbf{S}^{p}\left(\sqrt{\frac{p}{n}}\right)\times\mathbf{S}^{n-p}\left(\sqrt{\frac{n-p}{n}}\right)$ in $\mathbb{S}^{n+1}$ or the Veronese surface in $\mathbb{S}^4$ unless $M$ is the totally geodesic sphere $\mathbb{S}^n$ in $\mathbb{S}^{n+1}$. Li and Li \cite{LL} improved Simons' pinching constant to $\frac{2n}{3}$ for higher codimension $m\geq3$. Chen and Xu \cite{CX} obtained the same pinching constant independently  by using a different method. These rigidity results mentioned above can be viewed as an intrinsic rigidity theorem for pinching of scalar curvature according to the Gauss equation. The intrinsic  rigidity theorem for  pinching of sectional curvature was obtained by Yau \cite{Yau}, for pinching of Ricci curvature was obtained by Ejiri \cite{Ejiri79}.
The extrinsic  rigidity theorem for pinching of second fundamental form was obtained by Gauchman \cite{Gau1}.

There are many papers on the particularly interesting case of closed minimal Legendrian submanifolds in the unit sphere $\mathbb{S}^{2n+1}$ or closed minimal Lagrangian submanifolds in $\mathbb{CP}^n$ (for an incomplete list, see e.g. \cite{CO,YMI,NT82,BO1} for pinching of scalar curvature, \cite{Yau,YKM,BO2,Ogi,Ur1,DV} for pinching of sectional curvature, \cite{MRU} for pinching of Ricci curvature). Inspired by papers of Ros \cite{Ros, Ros85} on pinching and rigidity of K\"ahler submanifolds, Gauchman \cite{Gau2} and Xia \cite{Xia1} studied pinching of the geometric quantity
\begin{align*}
    \Theta(p)\coloneqq\max_{X\in T_pM,\  \abs{X}=1}\abs{\mathbf{B}\left(X,X\right)}
\end{align*}
for closed Lagrangian submanifolds in $\mathbb{CP}^n$. In particular, Xia \cite{Xia1} proved that if $\Theta^2\leq\frac{1}{2}$, then either $M$ is totally geodesic or $\Theta^2\equiv\frac{1}{2}$ and the last case was classified completely. 

These curvature pinching and characterization results were proved based on analysis of Simons' type  formula. This formula is related to a special sort of submanifolds, those that have parallel second fundamental form. Lagrangian submanifolds in $\mathbb{CP}^n$ with parallel second fundamental form were completely classified by Naitoh \cite{Nai,Nai83} for irreducible case and Dillen, Li, Vrancken  and Wang \cite{DLVW} for general case. The Classification theorem of Dillen, Li, Vrancken  and Wang claims that Lagrangian submanifolds with parallel second fundamental form in $\mathbb{CP}^n$ are one of the following: 
\begin{enumerate}[a)]
    \item totally geodesic submanifolds;
    \item  embedded submanifolds which are locally congruent to one of the following
standard embeddings in $\mathbb{CP}^n$:
\begin{align*}
    \mathrm{SU}(k)/\mathrm{SO}(k),\quad&\quad n=(k-1)(k+2)/2,\quad k\geq3,\\ \mathrm{SU}(k),\quad&\quad n=k^2-1,\quad k\geq3,\\
    \mathrm{SU}(2k)/\mathrm{Sp}(k) ,\quad&\quad n=2k^2-k-1,\quad k\geq3,\\ \mathrm{E}_6/\mathrm{F}_4,\quad&\quad n=26;
\end{align*}
    \item locally a finite Riemannian covering of the unique flat torus minimally embedded in $\mathbb{CP}^2$ with parallel second fundamental form;
    \item locally the Calabi product of a point with a lower dimensional Lagrangian submanifold with parallel second fundamental form;
\item locally the Calabi product of two lower dimensional Lagrangian submanifolds  with parallel second fundamental form.
\end{enumerate}
The  examples of a)-c) are minimal Lagrangian submanifolds, but examples of d)-e) contain both minimal and non-minimal ones. Furthermore the unique minimal submanifold in d) is the so called Calabi torus, which is the image of \autoref{exm} by the Hopf fibration of $\mathbb{S}^{2n+1}$ to $\mathbb{CP}^n$. 

The above mentioned papers in paragraph 2 gave various curvature pinching and characterization results for compact minimal Lagrangian submanifolds of a) (cf. \cite{CO,YMI,BO1,BO2,Ogi,Ur1}); a) and c) (cf. \cite{,NT82,Yau,YKM, DV}); and a), b) when $k=3$ and c) (cf. \cite{Gau2,Xia1}). Nevertheless according to our knowledge such kind of result was missed for the examples of d) and e). Bewaring of this, Luo and Sun \cite{LS18}  conjectured that if $M$ is a closed minimal Legendrian submanifold in $\mathbb{S}^{2n+1}$ and $\abs{\mathbf{B}}^2\leq\frac{(n+2)(n-1)}{n}$, then $M$ is either the totally geodesic sphere or the Calabi torus (cf. \autoref{exm}). In this paper we aim to get a curvature pinching and characterization result for the Calabi torus and we obtain the following theorem.
\begin{theorem}\label{thm1}
Let $M$ be a closed minimal Legendrian submanifold  in the unit sphere $\mathbb{S}^{2n+1}(n\geq2)$ and $\mathbf{B}$ be its second fundamental form. Assume the following pinching condition holds pointwisely,
\begin{align}\label{con}
\abs{\mathbf{B}}^2\leq\frac{n+2}{\sqrt{n}}\Theta,
\end{align}
then $M$ is either the totally geodesic sphere or the Calabi torus.

If $n=3$, the pinching condition can be changed weakly to
\begin{align*}
    \abs{\mathbf{B}}^2\leq 2+\Theta^2.
\end{align*}
\end{theorem}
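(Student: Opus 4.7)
My plan is to base the argument on a Simons-type identity for the second fundamental form, exploiting that the Legendrian condition combined with the Sasakian structure of $\mathbb{S}^{2n+1}$ forces the tensor $h_{ijk}$ describing $\mathbf{B}$ to be fully symmetric in all three indices, and that minimality gives $\sum_i h_{iij}=0$ for every $j$. Under these algebraic constraints the standard Bochner computation will yield a formula of the shape
\begin{align*}
\tfrac{1}{2}\Delta|\mathbf{B}|^2 = |\nabla\mathbf{B}|^2 + (n+1)|\mathbf{B}|^2 - Q(\mathbf{B}),
\end{align*}
where $Q(\mathbf{B})$ is an explicit quartic contraction of $h$ arising from the ambient sectional curvature of the sphere and from the commutators of covariant derivatives. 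My first step is to record this identity with $Q(\mathbf{B})$ written in a form amenable to the pointwise estimates below.

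The heart of the argument is a sharp pointwise algebraic inequality comparing $Q(\mathbf{B})$, $|\mathbf{B}|^2$ and $\Theta$. At a point $p\in M$, I would choose an orthonormal frame $\{e_i\}$ of $T_pM$ in which the maximum defining $\Theta(p)$ is attained by $e_1$, so $h_{111}=\Theta$, and further rotate within $e_1^\perp$ to diagonalize the symmetric matrix $(h_{1ij})_{i,j\geq 2}$. The first-order extremality of $\Theta$ yields $h_{11j}=0$ for $j\geq 2$, and second-order extremality gives $h_{1jj}\leq\Theta/2$ for each such $j$. Using the total symmetry of $h$, the minimality relations, these extremality constraints, and the elementary bound $|h(X,X,Y)|\leq\Theta|Y|$, I aim to prove a pointwise algebraic inequality of the form
\begin{align*}
(n+1)|\mathbf{B}|^2 - Q(\mathbf{B}) \geq C_n\,|\mathbf{B}|^2\bigl(\tfrac{n+2}{\sqrt{n}}\Theta - |\mathbf{B}|^2\bigr),
\end{align*}
or a variant of the same strength, with equality detecting precisely the algebraic model of the Calabi torus (on which one checks that $|\mathbf{B}|^2=\tfrac{(n-1)(n+2)}{n}$ and $\Theta=\tfrac{n-1}{\sqrt{n}}$, so both sides vanish). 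Under the hypothesis the right-hand side of Simons' identity is then pointwise nonnegative; integrating over closed $M$ annihilates the Laplacian, forcing $\nabla\mathbf{B}\equiv 0$ and the algebraic equality to hold throughout. The Dillen--Li--Vrancken--Wang classification recalled in the introduction then leaves only the totally geodesic sphere (when $\mathbf{B}\equiv 0$) or the Calabi torus.

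For the sharper $n=3$ pinching $|\mathbf{B}|^2\leq 2+\Theta^2$, I would exploit that the space of trace-free fully symmetric $3$-tensors on $\mathbb{R}^3$ is only five-dimensional (isomorphic to the space of harmonic cubics on $\mathbb{R}^3$). After normalizing $h_{111}=\Theta$ and diagonalizing $h(e_1,\cdot,\cdot)$ as above, only a handful of free components remain, and the required refinement of the algebraic inequality reduces to a concrete low-dimensional polynomial optimization, which I expect to handle either by Lagrange multipliers or by a direct completion-of-squares argument.

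\textbf{Main obstacle.} The principal difficulty is the sharp pointwise algebraic inequality of the second paragraph. Matching the precise constant $(n+2)/\sqrt{n}$ in the hypothesis to the quartic $Q(\mathbf{B})$---so that nonnegativity follows pointwise and the Calabi torus is detected as the unique nontrivial equality configuration---requires a delicate constrained optimization over trace-free fully symmetric $3$-tensors with $\Theta$ and $|\mathbf{B}|^2$ prescribed. Identifying the extremizer as the algebraic model of the Calabi torus, and verifying the analogous sharper version in dimension three, is where the bulk of the technical work will lie.
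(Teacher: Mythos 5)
Your strategy --- establish a pointwise algebraic inequality making the curvature terms in Simons' identity nonnegative under the pinching, then integrate over the closed manifold --- is not the paper's route, and its central step is false. Concretely, the claim that the first- and second-order extremality constraints at a point ($h_{11j}=0$ and $h_{1jj}\le\Theta/2$ for $j\ge2$, together with $\trace h_i=0$) force
\begin{equation*}
(n+1)\abs{\mathbf{B}}^2-Q(\mathbf{B})\;\ge\;C_n\,\abs{\mathbf{B}}^2\Bigl(\tfrac{n+2}{\sqrt n}\Theta-\abs{\mathbf{B}}^2\Bigr)
\end{equation*}
fails already for $n=3$. In the normal form used in Section~3 of the paper ($\sigma_{111}=\lambda_1+\lambda_2$, $\sigma_{122}=-\lambda_1$, $\sigma_{133}=-\lambda_2$, all other independent components zero), one computes $(n+1)\abs{\sigma}^2-Q=10x+6y-\tfrac{15}{2}x^2-\tfrac32 y^2-15xy$ with $x=(\lambda_1+\lambda_2)^2=\Theta^2$, $y=(\lambda_1-\lambda_2)^2$ and $\abs{\sigma}^2=\tfrac52x+\tfrac32y$. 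Choosing $\lambda_1+\lambda_2=\tfrac{2}{\sqrt3}(1-\delta)$ and $y=\tfrac{20}{9}\delta(1-\delta)$ gives $\abs{\sigma}^2=\tfrac{10}{3}(1-\delta)=\tfrac{5}{\sqrt3}\Theta$, so the pinching \eqref{con} holds (with equality), $e_1$ is the genuine maximizing direction and all your extremality constraints are satisfied, yet $(n+1)\abs{\sigma}^2-Q=-\tfrac{40}{9}\delta+O(\delta^2)<0$. Hence the integrand is not pointwise nonnegative under \eqref{con}, and integration yields nothing; the same configurations defeat the $2+\Theta^2$ version in dimension three. This is consistent with the paper's own Theorem~3.1 and the remark after it: the best constant the authors can reach by exactly your integral technique in dimension three is the strictly weaker $\tfrac{10}{7}(1+\Theta^2)$, and they state explicitly that the conclusion of Theorem~1.1 cannot be recovered as an integral inequality.

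The paper's actual proof uses two ingredients for which your plan has no substitute. First, it works at a global maximum point $p_0$ of $\Theta$, where the genuinely non-pointwise information $\Delta\sigma_{111}(p_0)\le0$ (second derivative test along geodesics through $p_0$) is available; combined with the componentwise Simons identity \eqref{eq:simons} and Newton's inequality this yields $\abs{\mathbf{B}}^2\ge\tfrac{n+2}{\sqrt n}\Theta$ at $p_0$, hence equality under \eqref{con} and a rigid algebraic form of $\sigma(p_0)$. Second, the rigidity must be propagated from $p_0$ to all of $M$: the paper shows the maximizing direction at $p_0$ has multiplicity one, extends it to a smooth local unit field, proves that $\Theta$ is subharmonic near $p_0$, and applies the strong maximum principle to show that $\set{\Theta=\max\Theta}$ is open as well as closed; only then does the explicit form of $\sigma$ hold everywhere and the Calabi torus follow (via \cite{LS18}, with no appeal to parallelism of $\mathbf{B}$ or the Dillen--Li--Vrancken--Wang classification). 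Without a valid pointwise inequality and without this propagation step, your argument does not close.
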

\begin{rem}
Checking the proof step by step, one obtains that a closed minimal Legendrian submanifold $M$ in the unit sphere $\mathbb{S}^7$ with $\Theta^2\leq\frac{2}{3}$ must be the totally geodesic sphere. Therefore we improve Xia's result \cite{Xia1} for minimal  Lagrangian submanifolds in $\mathbb{CP}^3$.  Consequently, in case of dimension 3 we obtain Li and Li's type pinching condition \cite{LL} , i.e., $M$ is totally geodesic if $\abs{\mathbf{B}}^2\leq\frac{8}{3}$. 
\end{rem}

It is worthy to be mentioned that this theorem could be  parallel stated for closed minimal Lagrangian submanifolds in $\mathbb{CP}^n$, due to the well known correspondence of minimal Legendrian submanifolds in $\mathbb{S}^{2n+1}$ and minimal Lagrangian submanifolds in $\mathbb{CP}^n$ (cf. \cite{CLU}), or by proofs with similar argument.

We prove  \autoref{thm1} by applying a maximum principle for tensor and a Simons' type formula of closed minimal Legendrian submanifolds in the unit sphere. We will also use an integral method to get an integral inequality of three dimensional closed minimal Legendrian submanifolds in $\mathbb{S}^7$, which implies another pinching and rigidity result for them (cf.  \autoref{thm2}).

In section 2 we give some preliminaries on Legendrian submanifolds of the unit sphere, including some details on the Calabi torus and a Simons' type formula.  In section 3 we prove an integral inequality of closed Legendrian submanifolds in $\mathbb{S}^7$. \autoref{thm1} is proved in section 4. In the Appendix we prove an integral inequality for Lagrangian submanifolds in the nearly K\"ahler $\mathbb{S}^6$ by  a similar argument used in the proof of \autoref{thm2}, which improves the main theorem of Hu, Yin and Yin \cite{HYY}.

\section{Preliminaries}
Here we briefly record several facts about Legendrian submanifolds in the unit sphere. We refer the reader to consult \cite{Bl} for more materials about the contact geometry.

Let $M$ be a closed $n$-dimensional submanifold of the unit sphere $\mathbb{S}^{2n+1}\subset\mathbb{C}^{n+1}$. We say that $M$ is Legendrian if 
\begin{align*}
    JTM\subset T^{\bot}M,\quad JF\in\Gamma\left(T^{\bot}M\right) 
\end{align*}
where $F: M\To\mathbb{S}^{2n+1}$ is the position vector and $J$ is the complex structure of $\mathbb{C}^{n+1}$. We say that $M$ is a minimal Legenbdrian subamnifold of $\mathbb{S}^{2n+1}$ if $M$ is a minimal and Legendrian submanifold of $\mathbb{S}^{2n+1}$. Define
\begin{align*}
    \sigma\left(X,Y,Z\right)\coloneqq\hin{\mathbf{B}\left(X,Y\right)}{JZ},\quad\forall X,Y,Z\in TM.
\end{align*}
The Weingarten equation implies that
\begin{align*}
    \sigma\left(X,Y,Z\right)=\sigma\left(Y,X,Z\right).
\end{align*}
Moreover, by definition, one can check that $\sigma$ is a three order symmetric tensor, i.e.,
\begin{align*}
    \sigma\left(X,Y,Z\right)=\sigma\left(Y,X,Z\right)=\sigma\left(X,Z,Y\right).
\end{align*}
The Gauss equation, Codazzi equation and Ricci equation becomes
\begin{align*}
    R\left(X,Y,Z,W\right)=&\hin{X}{Z}\hin{Y}{W}-\hin{X}{W}\hin{Y}{Z}\\
    &+\sigma\left(X,Z,e_i\right)\sigma\left(Y,W,e_i\right)-\sigma\left(X,W,e_i\right)\sigma\left(Y,Z,e_i\right),\\
    \left(\nabla_{X}\sigma\right)\left(Y,Z,W\right)=&\left(\nabla_{Y}\sigma\right)\left(X,Z,W\right),\\
    R^{\bot}\left(X,Y,JZ,JW\right)=&R\left(X,Y,Z,W\right),
\end{align*}
where $\set{e_i}$ is an orthonormal basis of $TM$. The Codazzi equation implies
\begin{align*}
    \left(\nabla_{X}\sigma\right)\left(Y,Z,W\right)=\left(\nabla_{Y}\sigma\right)\left(X,Z,W\right)=\left(\nabla_{X}\sigma\right)\left(Z,Y,W\right)=\left(\nabla_{X}\sigma\right)\left(Y,W,Z\right),
\end{align*}
i.e., $\nabla\sigma$ is a fourth order symmetric tensor.

A typical example of closed minimal Legendrian submanifold in the unit sphere is the totally real and totally geodesic sphere $\mathbb{S}^{n}\subset\mathbb{S}^{2n+1}$. One can use the Calabi product (cf. \cite{CMU}) to obtain new examples of minimal Legendrian submanifolds.  

\begin{eg}[Calabi product Legendrian immersions]\label{exm}
Let 
\begin{align*}
    \gamma=(\gamma_1,\gamma_2):\mathbb{S}^1\To\mathbb{S}^{3},\quad t\mapsto\left(\sqrt{\frac{n}{n+1}}\exp\left(\sqrt{-1}\sqrt{\frac{1}{n}}t\right),\sqrt{\frac{1}{n+1}}\exp\left(-\sqrt{-1}\sqrt{n}t\right)\right)
\end{align*} be the totally geodesic sphere. Then $ F\coloneqq\left(\gamma_1\phi,\gamma_2\right):\mathbb{S}^1\times \mathbb{S}^{n-1}\To\mathbb{S}^{2n+1}$ is a minimal Legendrian immersion. Denote by $M\coloneqq F\left(\mathbb{S}^1\times \mathbb{S}^{n-1}\right)$. One can choose a local orthonormal frames of $TM$ such that the second fundamental form $\mathbf{B}$ satisfies 
\begin{align*}
    \mathbf{B}\left(e_1,e_j\right)=&-\sqrt{\dfrac{1}{n}}Je_j+\delta_{1j}\sqrt{n}Je_1,\\
    \mathbf{B}\left(e_i,e_j\right)=&-\delta_{ij}\sqrt{\dfrac{1}{n}}Je_1,\quad i, j\in\set{2,\dotsc,n}.
\end{align*}
We call this minimal Legendrian submanifold $M$ the  \emph{Calabi torus}. One can check that
\begin{align*}
    \abs{\mathbf{B}}^2=\dfrac{(n-1)(n+2)}{n},\quad\max_{X\in TM,\  \abs{X}=1}\abs{\mathbf{B}\left(X,X\right)}=\dfrac{n-1}{\sqrt{n}}.
\end{align*}

\end{eg}

We will need the following Simons' identity (cf. \cite{Si}, see also \cite{CO, YMI}).
\begin{lem}[Simons' identity]
Assume that $M$ is a minimal Legendrian submanifold in $\mathbb{S}^{2n+1}$. Then
\begin{equation}\label{eq:simons}
\begin{split}
\Delta\sigma_{ijk}\coloneqq&\sum_l\sigma_{ijk,ll}\\
=&(n+1)\sigma_{ijk}+2\sum_{l,s,t}\sigma_{isl}\sigma_{jlt}\sigma_{kts}-\sum_{l,s,t}\sigma_{tli}\sigma_{tls}\sigma_{jks}
-\sum_{l,s,t}\sigma_{tlj}\sigma_{tls}\sigma_{iks}-\sum_{l,s,t}\sigma_{tlk}\sigma_{tls}\sigma_{ijs}.
\end{split}
\end{equation}
Consequently,
\begin{align}\label{eq:simons1}
\dfrac12\Delta\abs{\sigma}^2=&\abs{\nabla\sigma}^2+\left(n+1\right)\abs{\sigma}^2-\sum_{i,j}\hin{\sigma_i}{\sigma_j}^2-\sum_{i,j}\abs{[\sigma_i,\sigma_j]}^2,
\end{align}
where $\sigma_i=\left(\sigma_{ijk}\right)_{1\leq j,k\leq n}$.
\end{lem}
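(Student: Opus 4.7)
The plan is to prove the pointwise identity \eqref{eq:simons} by the standard Codazzi–Ricci commutator route, then deduce \eqref{eq:simons1} by contracting with $\sigma$ and exploiting the total symmetry of $\sigma$. First, since $\nabla\sigma$ is totally symmetric, Codazzi gives $\sigma_{ijk,l}=\sigma_{ljk,i}$, so
\begin{align*}
    \Delta\sigma_{ijk}=\sum_{l}\sigma_{ijk,ll}=\sum_{l}\sigma_{ljk,il}=\sum_{l}\sigma_{ljk,li}+\sum_{l}[\nabla_i,\nabla_l]\sigma_{ljk}.
\end{align*}
Minimality forces $\sum_{l}\sigma_{llk}\equiv 0$; combining its covariant derivative with Codazzi gives $\sum_{l}\sigma_{ljk,l}\equiv 0$, hence $\sum_{l}\sigma_{ljk,li}\equiv 0$. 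Thus $\Delta\sigma_{ijk}$ reduces to the Ricci commutator, a sum of three contractions of $R$ with $\sigma$, one for each of the three slots.

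Next, I would substitute the Gauss equation $R_{ijkl}=\delta_{ik}\delta_{jl}-\delta_{il}\delta_{jk}+\sigma_{ikp}\sigma_{jlp}-\sigma_{ilp}\sigma_{jkp}$ into each of the three contractions. The ``metric'' piece contributes linearly in $\sigma_{ijk}$; after using minimality to annihilate side traces, the three contributions consolidate into $(n+1)\sigma_{ijk}$. The ``$\sigma\sigma$'' piece produces the four cubic terms in \eqref{eq:simons}: three identical ``trace'' contributions of the form $-\sigma_{tli}\sigma_{tls}\sigma_{jks}$ (one per commutator, with $j,k,l$ playing the ``outer'' role in turn) and one ``cyclic'' contribution $+2\sigma_{isl}\sigma_{jlt}\sigma_{kts}$, with the factor $2$ reflecting that two of the three commutators contribute the same cyclic shape.

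For \eqref{eq:simons1}, I would contract \eqref{eq:simons} with $\sigma_{ijk}$ and invoke $\tfrac12\Delta\abs{\sigma}^2=\abs{\nabla\sigma}^2+\sum_{i,j,k}\sigma_{ijk}\Delta\sigma_{ijk}$. The linear term yields $(n+1)\abs{\sigma}^2$. By total symmetry of $\sigma$, each of the three trace-type cubic terms contracts with $\sigma_{ijk}$ to $\sum_{i,j}\hin{\sigma_i}{\sigma_j}^2$, giving $-3\sum_{i,j}\hin{\sigma_i}{\sigma_j}^2$ in total. The cyclic term contracts to $2\sum_{i,j}\trace(\sigma_i\sigma_j\sigma_i\sigma_j)$: indeed, any ``$K_4$-type'' complete contraction of four copies of $\sigma$ equals any other by total symmetry. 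Applying the symmetric-matrix identity $\trace(ABAB)=\trace(A^2B^2)-\tfrac12\abs{[A,B]}^2$ together with the further symmetry identity $\sum_{i,j}\trace(\sigma_i^2\sigma_j^2)=\sum_{i,j}\hin{\sigma_i}{\sigma_j}^2$ then rewrites this as $2\sum_{i,j}\hin{\sigma_i}{\sigma_j}^2-\sum_{i,j}\abs{[\sigma_i,\sigma_j]}^2$. Combining yields \eqref{eq:simons1}.

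I expect the main obstacle to be pinning down the coefficient $(n+1)$ rather than the classical Simons value $n$: the three Ricci commutators produce a priori different metric-type traces, and only after carefully invoking minimality do they coalesce into a single clean $(n+1)\sigma_{ijk}$. The extra ``$+1$'' is the Legendrian signature and is ultimately traceable, via the Ricci equation $R^{\bot}(X,Y,JZ,JW)=R(X,Y,Z,W)$ under the identification $\sigma_{ijk}=\hin{\mathbf{B}(e_i,e_j)}{Je_k}$, to the fact that what is a normal slot for $\mathbf{B}$ becomes a tangent slot for $\sigma$, contributing one additional trace direction. Everything else is index bookkeeping plus the two elementary symmetric-matrix identities above.
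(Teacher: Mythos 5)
Your proposal is correct and follows essentially the same route as the paper's proof: swap derivative indices via the total symmetry of $\nabla\sigma$ (Codazzi), apply the Ricci identity, substitute the Gauss equation, and use minimality ($\mu_i=\trace\sigma_i=0$) to kill the trace terms, yielding the coefficient $1+1+(n-1)=n+1$ and the factor $2$ on the cyclic term exactly as you describe. Your contraction argument for \eqref{eq:simons1}, including the identities $\trace(ABAB)=\trace(A^2B^2)-\tfrac12\abs{[A,B]}^2$ and the equality of $K_4$-type contractions by total symmetry, correctly supplies the step the paper leaves implicit under ``Consequently.''
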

\begin{proof}
The Ricci identity yields
\begin{align*}
\sigma_{ijk,lm}=\sigma_{ijk,ml}+\sum_{t}\sigma_{tjk}R_{tilm}+\sum_{t}\sigma_{itk}R_{tjlm}+\sum_{t}\sigma_{ijt}R_{tklm}.
\end{align*}
Therefore,
\begin{align*}
\Delta\sigma_{ijk}=&\sum_{l}\sigma_{ijk,ll}\\
=&\sum_{l}\sigma_{ijl,kl}\\
=&\sum_{l}\sigma_{ijl,lk}+\sum_{l,t}\sigma_{tjl}R_{tikl}+\sum_{l,t}\sigma_{itl}R_{tjkl}+\sum_{l,t}\sigma_{ijt}R_{tlkl}\\
=&\mu_{i,jk}+\sum_{l,t}\sigma_{tjl}R_{tikl}+\sum_{l,t}\sigma_{itl}R_{tjkl}+\sum_{l,t}\sigma_{ijt}R_{tlkl}.
\end{align*}
Here  $\mu_i=\trace\sigma_i$.
Thus by the Gauss equation,
\begin{align*}
\Delta\sigma_{ijk}=&\mu_{i,jk}+\sum_{l,t}\sigma_{tjl}\left(\delta_{tk}\delta_{il}-\delta_{tl}\delta_{ik}+\sigma_{tks}\sigma_{ils}-\sigma_{tls}\sigma_{iks}\right)\\
&+\sum_{l,t}\sigma_{til}\left(\delta_{tk}\delta_{jl}-\delta_{tl}\delta_{jk}+\sigma_{tks}\sigma_{jls}-\sigma_{tls}\sigma_{jks}\right)\\
&+\sum_{l,t}\sigma_{ijt}\left((n-1)\delta_{tk}+\sigma_{tks}\sigma_{lls}-\sigma_{tls}\sigma_{lks}\right)\\
=&\mu_{i,jk}+\sigma_{ijk}-\mu_{j}\delta_{ik}+\sum_{l,s,t}\sigma_{tjl}\left(\sigma_{tks}\sigma_{ils}-\sigma_{tls}\sigma_{iks}\right)\\
&+\sigma_{ijk}-\mu_{i}\delta_{jk}+\sum_{l,s,t}\sigma_{til}\left(\sigma_{tks}\sigma_{jls}-\sigma_{tls}\sigma_{jks}\right)\\
&+(n-1)\sigma_{ijk}+\sum_{l,s,t}\sigma_{ijt}\left(\sigma_{tks}\mu_{s}-\sigma_{tls}\sigma_{lks}\right)\\
=&\mu_{i,jk}-\mu_{i}\delta_{jk}-\mu_{j}\delta_{ik}+\sum_{s,t}\sigma_{ijt}\sigma_{tks}\mu_{s}\\
&+(n+1)\sigma_{ijk}+2\sum_{l,s,t}\sigma_{tjl}\sigma_{tks}\sigma_{ils}-\sum_{l,s,t}\sigma_{tjl}\sigma_{tls}\sigma_{iks}
-\sum_{l,s,t}\sigma_{til}\sigma_{tls}\sigma_{jks}-\sum_{l,s,t}\sigma_{tls}\sigma_{lks}\sigma_{ijt}
\\&=(n+1)\sigma_{ijk}+2\sum_{l,s,t}\sigma_{tjl}\sigma_{tks}\sigma_{ils}-\sum_{l,s,t}\sigma_{tjl}\sigma_{tls}\sigma_{iks}
-\sum_{l,s,t}\sigma_{til}\sigma_{tls}\sigma_{jks}-\sum_{l,s,t}\sigma_{tls}\sigma_{lks}\sigma_{ijt},
\end{align*}
where we used the fact that $\mu_i=0$ since $M$ is minimal.
\end{proof}

\section{An integral inequality for the three dimensional case}
In this section we prove an integral inequality for closed three dimensional minimal Legendrian submanifolds in $\mathbb{S}^7$, which is inspired by a recent paper of Hu,Yin and Yin \cite{HYY}.

\begin{theorem}\label{thm2}
Let $M$ be a closed minimal Legendrian submanifold  in the unit sphere $\mathbb{S}^{7}$, then
\begin{align}\label{intI}
\int_{M}\abs{\mathbf{B}}^2\left(\abs{\mathbf{B}}^2-\dfrac{10}{7}\left(1+\Theta^2\right)\right)\geq0.
\end{align}
Consequently, if
\begin{align*}
  \abs{\mathbf{B}}^2\leq\dfrac{10}{7}\left(1+\Theta^2\right),
\end{align*}
then $M$ is either the totally geodesic sphere or the  Calabi torus.
\end{theorem}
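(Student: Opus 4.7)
The plan is to integrate the Simons' identity~\eqref{eq:simons1} over the closed manifold $M$ and then reduce matters to a sharp pointwise algebraic inequality for the fully symmetric, trace-free $3$-tensor $\sigma_{ijk}$ in dimension $n=3$. Integrating~\eqref{eq:simons1} with $n=3$ and using $\int_M \Delta|\sigma|^2=0$ gives
\begin{align*}
\int_M \Big(|\nabla\sigma|^2 + 4|\sigma|^2\Big) = \int_M \Big(\sum_{i,j}\langle\sigma_i,\sigma_j\rangle^2 + \sum_{i,j}|[\sigma_i,\sigma_j]|^2\Big).
\end{align*}
Using this to eliminate the linear term $\int 4|\sigma|^2$ in the target inequality~\eqref{intI} recasts the claim as
\begin{align*}
\int_M \Big(|\sigma|^4 + \tfrac{5}{14}|\nabla\sigma|^2 - \tfrac{10}{7}|\sigma|^2\Theta^2 - \tfrac{5}{14}\sum_{i,j}\langle\sigma_i,\sigma_j\rangle^2 - \tfrac{5}{14}\sum_{i,j}|[\sigma_i,\sigma_j]|^2\Big)\ge 0,
\end{align*}
and since the gradient contribution is nonnegative, it suffices to produce the remainder as a pointwise inequality.

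The heart of the argument is therefore an algebraic estimate
\begin{align*}
\tfrac{5}{14}\Big(\sum_{i,j}\langle\sigma_i,\sigma_j\rangle^2 + \sum_{i,j}|[\sigma_i,\sigma_j]|^2\Big) + \tfrac{10}{7}|\sigma|^2\Theta^2 \le |\sigma|^4
\end{align*}
valid for any fully symmetric trace-free $3$-tensor on $\mathbb{R}^3$, with $\Theta^2 := \max_{|X|=1}\sum_k\sigma(X,X,e_k)^2$. At each point $p\in M$ the plan is to pick an orthonormal frame adapted to $\Theta$: choose $e_1$ to realize the maximum, so that $\sum_k \sigma_{11k}^2 = \Theta^2$. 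First-order extremality of $X\mapsto\sum_k \sigma(X,X,e_k)^2$ at $X=e_1$ on the unit sphere yields the orthogonality relations $\sum_k \sigma_{11k}\sigma_{1jk}=0$ for $j=2,3$, and a further $\mathrm{SO}(2)$-rotation of $\{e_2,e_3\}$ diagonalizes the block $(\sigma_{1jk})_{j,k\ge 2}$. Combined with the minimality conditions $\sigma_{11k}+\sigma_{22k}+\sigma_{33k}=0$, the ten independent components of $\sigma$ collapse to a handful of scalar parameters, and each of the quartic invariants becomes an explicit polynomial one can estimate directly. The sharp constant $\tfrac{10}{7}$ reflects the dimension count $7 = \dim\{\text{symmetric traceless $3$-tensors on }\mathbb{R}^3\}$ inside the $10$-dimensional ambient space of symmetric $3$-tensors, and is pinned down by the requirement that equality hold at the Calabi torus of~\autoref{exm} (where $|\sigma|^2=10/3$, $\Theta^2=4/3$, and indeed $|\sigma|^2 = \tfrac{10}{7}(1+\Theta^2)$).

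The main obstacle will be this pointwise algebraic inequality with the sharp constant: the reduction to few parameters via the adapted frame is routine, but the subsequent polynomial estimate must be tight enough to match $\tfrac{10}{7}$ without slack, which seems to require a case analysis on the sign and size of the remaining diagonal components. Once~\eqref{intI} is established the second assertion follows immediately: under the pinching $|\mathbf{B}|^2\leq \tfrac{10}{7}(1+\Theta^2)$ the integrand in~\eqref{intI} is nonpositive, so equality holds throughout, the gradient term $\int\tfrac{5}{14}|\nabla\sigma|^2$ vanishes, and $\sigma$ is parallel with the prescribed Calabi-torus algebraic form at every point where it is nonzero. Invoking the Dillen--Li--Vrancken--Wang classification recalled in the introduction, $M$ must be either the totally geodesic $\mathbb{S}^3$ or the Calabi torus.
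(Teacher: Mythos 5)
Your overall strategy is the same as the paper's: combine the Simons identity \eqref{eq:simons1} with a sharp pointwise algebraic inequality for the symmetric trace-free $3$-tensor $\sigma$ in an adapted frame, and deduce \eqref{intI} by integration. (The paper organizes this as a pointwise differential inequality $\frac12\Delta\abs{\sigma}^2\geq\abs{\nabla\sigma}^2+\frac{14}{5}\bigl(\frac{10}{7}(1+\Theta^2)-\abs{\sigma}^2\bigr)\abs{\sigma}^2$ which is then integrated, rather than integrating first and moving terms around, but these are the same computation. Likewise, your target inequality $\frac{5}{14}\bigl(\sum\hin{\sigma_i}{\sigma_j}^2+\sum\abs{[\sigma_i,\sigma_j]}^2\bigr)+\frac{10}{7}\abs{\sigma}^2\Theta^2\leq\abs{\sigma}^4$ is, after substituting the $n=3$ identity $\sum\abs{[\sigma_i,\sigma_j]}^2=4\sum\hin{\sigma_i}{\sigma_j}^2-\abs{\sigma}^4$ coming from the vanishing of the Weyl part of the algebraic curvature, exactly the paper's inequality $\sum\hin{\sigma_i}{\sigma_j}^2-\frac15\abs{\sigma}^4\leq\frac{14}{25}\bigl(\abs{\sigma}^2-\frac{10}{7}\Theta^2\bigr)\abs{\sigma}^2$.)

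The genuine gap is that this algebraic inequality --- which is the entire technical content of the theorem --- is asserted, not proved. You correctly reduce to the normal form and verify that the constant $\frac{10}{7}$ is consistent with equality at the Calabi torus, but "a case analysis on the sign and size of the remaining diagonal components" is not an argument, and the dimension-count heuristic for $\frac{10}{7}$ is numerology that proves nothing. The paper's proof of this step is a specific factorization: writing $x=(\lambda_1+\lambda_2)^2=\Theta^2$, $y=(\lambda_1-\lambda_2)^2$, $z=4(\mu_1^2+\mu_2^2)$, one checks that $\sum\hin{\sigma_i}{\sigma_j}^2-\frac15\abs{\sigma}^4=\frac32x^2+\frac{3}{10}y^2+\frac{3}{10}z^2+3xy+\frac{9}{10}yz$ and then completes this to the product $\frac32\bigl(x+\kappa y+\frac{2\kappa}{3}z\bigr)\bigl(x+\frac35y+\frac25z\bigr)$ minus a remainder all of whose coefficients are nonnegative precisely when $\kappa\geq\frac75$; the choice $\kappa=\frac75$ balances the two resulting constants at $\frac{10}{7}$. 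Without this (or an equivalent) explicit argument the proof is incomplete. A secondary, more minor point: for the rigidity statement you should note that equality forces both $\nabla\sigma\equiv0$ and equality in the algebraic inequality (which pins down $y=z=0$, i.e.\ the Calabi-torus algebraic form), and it is this equality analysis --- not the Dillen--Li--Vrancken--Wang classification by itself --- that singles out the Calabi torus among parallel examples; the paper concludes by citing \cite{LS18} at this stage.
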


\begin{rem}
We would like to point out that though the pinching result we obtain by integral estimates  here is actually weaker than that we obtain in \autoref{thm1}, we can not get any integral inequality like \eqref{intI}  by the maximum principle used in the proof of \autoref{thm1}. Furthermore here we slightly refine the argument in \cite{HYY} and use it to give an improvement of the main theorem in \cite{HYY}, please see \autoref{thm3} in the Appendix for details. It seems that the maximum principle is not applicable in proving pinching result for Lagrangian submanifolds in the nearly K\"ahler $\mathbb{S}^6$.

\end{rem}

\begin{proof}[Proof of \autoref{thm2}]
Consider an algebraic curvature $\hat R$ defined by
\begin{align*}
\hat R_{ijkl}=\hin{\mathbf{B}\left(e_i,e_k\right)}{\mathbf{B}\left(e_j,e_l\right)}-\hin{\mathbf{B}\left(e_i,e_l\right)}{\mathbf{B}\left(e_j,e_k\right)},
\end{align*}
i.e.,
\begin{align*}
\hat R_{ijkl}=\sum_{a}\left(\sigma_{ika}\sigma_{jla}-\sigma_{ila}\sigma_{jka}\right)=[\sigma_i,\sigma_j]_{kl}.
\end{align*}
The algebraic Ricci curvature $\hat Ric$ and the algebraic scalar curvature $\hat S$ are given by
\begin{align*}
\hat Ric_{ij}=\sum_{a}\hat R_{iaja}=-\hin{\sigma_i}{\sigma_j},\quad\hat S=\sum_{i}\hat Ric_{ii}=-\abs{\sigma}^2.
\end{align*}
We therefore can rewrite the Simons' identity \eqref{eq:simons1} as follows
\begin{align}\label{eq:simons-3}
\dfrac12\Delta\abs{\sigma}^2=&\abs{\nabla\sigma}^2+\left(n+1\right)\abs{\sigma}^2-\abs{\hat Ric}^2-\abs{\hat R}^2.
\end{align}

Recall the orthogonal decomposition for the algebraic curvature
\begin{align*}
\hat R=\hat W+\dfrac{1}{n-2}\mathring{\hat R}ic\circledwedge g+\dfrac{\hat S}{2n(n-1)}g\circledwedge g,
\end{align*}
where $\hat W$ is the algebraic Weyl curvature and $\mathring{\hat R}ic=\hat Ric-\frac{\hat S}{n}g$ is the traceless algebraic Ricci curvature. We have the following identity
\begin{align*}
\abs{\hat R}^2=&\abs{\hat W}^2+\dfrac{4\abs{\mathring{\hat R}ic}^2}{n-2}+\dfrac{2\hat S^2}{n(n-1)}\\
=&\abs{\hat W}^2+\dfrac{4\abs{\hat Ric}^2}{n-2}-\dfrac{2\hat S^2}{(n-1)(n-2)}.
\end{align*}
For $n=3$, the algebraic Weyl curvature $\hat W$ vanishes. It follows from \eqref{eq:simons-3} that
\begin{align*}
\dfrac12\Delta\abs{\sigma}^2=&\abs{\nabla\sigma}^2+4\abs{\sigma}^2-5\abs{\hat Ric}^2+\abs{\hat S}^2\\
=&\abs{\nabla\sigma}^2+4\abs{\sigma}^2-5\sum_{i,j=1}^3\hin{\sigma_i}{\sigma_j}^2+\abs{\sigma}^4.
\end{align*}

At a point $p$, choose $e_1$ such that
\begin{align*}
    \sigma_{111}=\max_{X\in S_pM^3}\sigma\left(X,X,X\right),
\end{align*}
then $\sigma_{112}=\sigma_{113}=0$. Then we choose $\set{e_2, e_3}$ such that $\sigma_{123}=0$. In other words, we may assume
\begin{align*}
\sigma_1=\begin{pmatrix}\lambda_1+\lambda_2&0&0\\
0&-\lambda_1&0\\
0&0&-\lambda_2
\end{pmatrix},\quad \sigma_2=\begin{pmatrix}0&-\lambda_1&0\\
-\lambda_1&\mu_1&\mu_2\\
0&\mu_2&-\mu_1
\end{pmatrix},\quad \sigma_3=\begin{pmatrix}0&0&-\lambda_2\\
0&\mu_2&-\mu_1\\
-\lambda_2&-\mu_1&-\mu_2
\end{pmatrix}.
\end{align*}

A direct calculation yields
\begin{align*}
\abs{\sigma}^2=&4\lambda_1^2+4\lambda_2^2+2\lambda_1\lambda_2+4\left(\mu_1^2+\mu_2^2\right)\\
=&\dfrac{5}{2}\left(\lambda_{1}+\lambda_{2}\right)^2+\dfrac{3}{2}\left(\lambda_{1}-\lambda_{2}\right)^2+4\left(\mu_1^2+\mu_2^2\right),\\
\sum_{i,j}\hin{\sigma_i}{\sigma_j}^2=&4\left(\lambda_1^2+\lambda_2^2+\lambda_1\lambda_2\right)^2+4\left(\lambda_1^2+\mu_1^2+\mu_2^2\right)^2+4\left(\lambda_2^2+\mu_1^2+\mu_2^2\right)^2\\
&+2\left(\lambda_1-\lambda_2\right)^2\left(\mu_1^2+\mu_2^2\right)\\
=&\dfrac{11}{4}\left(\lambda_1+\lambda_2\right)^4+\dfrac{3}{4}\left(\lambda_1-\lambda_2\right)^4+8\left(\mu_1^2+\mu_2^2\right)^2\\
&+\dfrac{9}{2}\left(\lambda_1+\lambda_2\right)^2\left(\lambda_1-\lambda_2\right)^2+4\left(\lambda_1+\lambda_2\right)^2\left(\mu_1^2+\mu_2^2\right)+6\left(\lambda_1-\lambda_2\right)^2\left(\mu_1^2+\mu_2^2\right).
\end{align*}

Set
\begin{align*}
x=\left(\lambda_1+\lambda_2\right)^2,\quad y=\left(\lambda_1-\lambda_2\right)^2,\quad z=4\left(\mu_1^2+\mu_2^2\right),
\end{align*}
then
\begin{align*}
\abs{\sigma}^2=&\dfrac{5}{2}x+\dfrac{3}{2}y+z,\\
\sum_{i,j}\hin{\sigma_i}{\sigma_j}^2=&\dfrac{11}{4}x^2+\dfrac{3}{4}y^2+\dfrac12z^2+\dfrac{9}{2}xy+xz+\dfrac32yz,\\
\dfrac{1}{5}\abs{\sigma}^4=&\dfrac{5}{4}x^2+\dfrac{9}{20}y^2+\dfrac{1}{5}z^2+\dfrac{3}{2}xy+xz+\dfrac{3}{5}yz.
\end{align*}
For every $\kappa$, we have
\begin{align*}
\sum_{i,j}\hin{\sigma_i}{\sigma_j}^2-\dfrac{1}{5}\abs{\sigma}^4=&\dfrac{3}{2}x^2+\dfrac{3}{10}y^2+\dfrac{3}{10}z^2+3xy+\dfrac{9}{10}yz\\
=&\dfrac32\left(x+\kappa y+\dfrac{2\kappa}{3}z\right)\left(x+\dfrac{3}{5}y+\dfrac{2}{3}z\right)+\dfrac{3}{10}y^2+\dfrac{3}{10}z^2+3xy+\dfrac{9}{10}yz\\
&-\dfrac{9\kappa}{10}y^2-\dfrac{2\kappa}{3}z^2-\dfrac{15\kappa+9}{10}xy-\left(1+\kappa\right)xz-\dfrac{8\kappa}{5}yz\\
=&\dfrac32\left(x+\kappa y+\dfrac{2\kappa}{3}z\right)\left(x+\dfrac{3}{5}y+\dfrac{2}{3}z\right)\\
&-\dfrac{9\kappa-3}{10}y^2-\dfrac{20\kappa-9}{30}z^2-\dfrac{15\kappa-21}{10}xy-\left(1+\kappa\right)xz-\dfrac{16\kappa-9}{10}yz.
\end{align*}
For $\kappa\geq\frac{7}{5}$,
\begin{align*}
\sum_{i,j}\hin{\sigma_i}{\sigma_j}^2-\dfrac{1}{5}\abs{\sigma}^4\leq&\dfrac32\left(x+\kappa y+\dfrac{2\kappa}{3}z\right)\left(x+\dfrac{3}{5}y+\dfrac{2}{3}z\right)\\
=&\dfrac{2\kappa}{5}\left(\abs{\sigma}^2-\dfrac{5\kappa-3}{2\kappa}\Theta^2\right)\abs{\sigma}^2.
\end{align*}
Therefore we have the estimate
\begin{align*}
\dfrac12\Delta\abs{\sigma}^2\geq&\abs{\nabla\sigma}^2+2\kappa\left(\dfrac{2}{\kappa}+\dfrac{5\kappa-3}{2\kappa}\Theta^2-\abs{\sigma}^2\right)\abs{\sigma}^2,\quad\forall \kappa\geq\dfrac{7}{5}.
\end{align*}
In particular, take $\kappa=\frac{7}{5}$ to obtain
\begin{align*}
    \dfrac12\Delta\abs{\sigma}^2\geq&\abs{\nabla\sigma}^2+\frac{14}{5}\left(\dfrac{10}{7}\left(1+\Theta^2\right)-\abs{\sigma}^2\right)\abs{\sigma}^2.
\end{align*}
Integration by parts, we prove the first claim of the theorem. 

If $\abs{\mathbf{B}}^2\leq\frac{10}{7}\left(1+\Theta^2\right)$, we must have either $\mathbf{B}\equiv0$ and $M$ is totally geodesic or $\abs{\mathbf{B}}^2=\frac{10}{7}\left(1+\Theta^2\right)$ and $\lambda_1, \lambda_2$ are constants and $\mu_1=\mu_2=0$, which must be the minimal Calabi torus by \cite{LS18} since $M$ is closed.
\end{proof}

\section{Proof of  \autoref{thm1}}
In this section, we will give a proof of \autoref{thm1}. Firstly we need to prove several lemmas about the function $\Theta$.

Let $SM$ be the unit tangent bundle of $M$ and $S_pM$ the fibre of the unit tangent bundle of $M$ at $p\in M$. We have the following characterization of the function $\Theta$.
\begin{lem}
\begin{align}\label{Theta}
    \Theta(p)=\max_{X\in S_pM}\sigma\left(X,X,X\right).
\end{align}
\end{lem}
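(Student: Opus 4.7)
The plan is to prove the identity by showing two inequalities.

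The easy inequality $\max_{X\in S_pM}\sigma(X,X,X)\leq\Theta(p)$ is immediate from Cauchy--Schwarz: $\sigma(X,X,X)=\hin{\mathbf{B}(X,X)}{JX}\leq|\mathbf{B}(X,X)|\cdot|JX|\leq\Theta(p)$.

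For the reverse, I would first use the Legendrian condition to observe that $\mathbf{B}(X,Y)\in JTM$ for every $X,Y\in T_pM$; indeed $\tilde\nabla_X JF=JX$ on the sphere forces $\hin{\mathbf{B}(X,Y)}{JF}=0$, so the normal component along $JF$ vanishes. Consequently $|\mathbf{B}(X,X)|^2=\sum_i\sigma(X,X,e_i)^2$ and
\[
\Theta(p)=\max_{X,Y\in S_pM}\sigma(X,X,Y).
\]
Let $(X_0,Y_0)$ achieve this maximum with value $\Theta\coloneqq\Theta(p)$. Lagrange multipliers on $S_pM\times S_pM$, after contracting the resulting identities against $X_0$ and $Y_0$ to evaluate the multipliers, give
\[
\sigma(X_0,V,Y_0)=\Theta\hin{X_0}{V},\qquad\sigma(X_0,X_0,V)=\Theta\hin{Y_0}{V},\qquad\forall V\in T_pM.
\]

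The crux is to reduce this bilinear maximum to the diagonal. Set $c=\hin{X_0}{Y_0}=\cos\theta$. The degenerate cases $c=\pm1$ are immediate, since then $\sigma(\pm X_0,\pm X_0,\pm X_0)=\Theta$. Otherwise let $u=(Y_0-cX_0)/\sin\theta$, a unit vector orthogonal to $X_0$, and specialize the two Lagrange identities to $V=u$, using total symmetry of $\sigma$, to obtain
\[
\sigma(X_0,X_0,X_0)=\Theta c,\qquad\sigma(X_0,X_0,u)=\Theta\sin\theta,\qquad\sigma(X_0,u,u)=-\Theta c.
\]
Then for $W_\phi=\cos\phi\,X_0+\sin\phi\,u\in S_pM$, a direct expansion combined with the triple-angle identities $\cos^3\phi-3\cos\phi\sin^2\phi=\cos 3\phi$ and $3\cos^2\phi\sin\phi=\sin 3\phi+\sin^3\phi$ yields
\[
\sigma(W_\phi,W_\phi,W_\phi)=\Theta\cos(3\phi-\theta)+\bigl(\Theta\sin\theta+\sigma(u,u,u)\bigr)\sin^3\phi.
\]
Among the three roots $\phi\in\{\theta/3,\theta/3+2\pi/3,\theta/3+4\pi/3\}$ of $\cos(3\phi-\theta)=1$, the three corresponding values of $\sin\phi$ sum to zero and cannot all vanish; hence at least one has the same sign as the coefficient $\Theta\sin\theta+\sigma(u,u,u)$ (or that coefficient is zero). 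For this choice of $\phi$ the second term is nonnegative, so $\sigma(W_\phi,W_\phi,W_\phi)\geq\Theta$, completing the proof.

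The main obstacle is precisely this last step, the reduction to the diagonal: it is a sharp form of Banach's theorem on symmetric multilinear forms. The three-$\phi$ argument above implements it concretely using only the total symmetry of $\sigma$ and avoids invoking Banach's theorem as a black box.
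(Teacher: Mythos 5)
Your argument is correct, and it takes a genuinely different route from the paper's. The paper works directly with a maximizer $u$ of $\abs{\mathbf{B}(X,X)}$ on $S_pM$: the first-order condition $\hin{\mathbf{B}(u,u)}{\mathbf{B}(u,e_j)}=0$ for $e_j\perp u$, combined with $\mathbf{B}\perp JF$, gives $\left(\mathbf{A}^{Ju}\right)^2u=\abs{\mathbf{B}(u,u)}^2u$, from which it concludes $\mathbf{A}^{Ju}u=\pm\abs{\mathbf{B}(u,u)}u$ and hence $\abs{\sigma(u,u,u)}=\Theta(p)$. You instead pass to the bilinear maximum $\Theta(p)=\max_{X,Y\in S_pM}\sigma(X,X,Y)$ (the same use of $\mathbf{B}\perp JF$) and then carry out by hand the diagonal-reduction step of Banach's theorem on symmetric multilinear forms, via the identity $\sigma(W_\phi,W_\phi,W_\phi)=\Theta\cos(3\phi-\theta)+\left(\Theta\sin\theta+\sigma(u,u,u)\right)\sin^3\phi$ and the observation that the three admissible values of $\sin\phi$ sum to zero. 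I checked the Lagrange identities, the values $\sigma(X_0,X_0,X_0)=\Theta c$, $\sigma(X_0,X_0,u)=\Theta\sin\theta$, $\sigma(X_0,u,u)=-\Theta c$, the triple-angle expansion, and the sign argument; all are correct, and you use only the total symmetry of $\sigma$, which is exactly what the Legendrian condition provides. As for what each approach buys: the paper's proof is shorter, but its passage from $\left(\mathbf{A}^{Ju}\right)^2u=c^2u$ to $\mathbf{A}^{Ju}u=\pm cu$ is stated without justification (a priori $u$ only lies in the sum of the $+c$ and $-c$ eigenspaces of the symmetric operator $\mathbf{A}^{Ju}$, and $\hin{\mathbf{A}^{Ju}u}{u}$ could then be strictly between $-c$ and $c$), so a careful reader must supply an extra observation there; your proof is longer but self-contained, avoids the shape-operator bookkeeping entirely, and would apply verbatim to any symmetric $3$-tensor, which makes transparent exactly which structural facts the lemma depends on.
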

\begin{proof} It is a straitforward verification. For readers' convenience, we list a proof here.

It suffices to prove that
\begin{align*}
   \max_{X\in S_pM}\sigma\left(X,X,X\right)\geq \Theta(p).
\end{align*}
Assume for some $u\in S_pM$,
\begin{align*}
    \Theta(p)=\max_{X\in S_pM}\abs{\mathbf{B}\left(X,X\right)}=\abs{\mathbf{B}\left(u,u\right)}.
\end{align*}
Choose a local orthonormal basis $\set{e_i}$ of $T_pM$ such that $e_1=u$. Applying the maximum principle, 
\begin{align*}
    \hin{\mathbf{A}^{\mathbf{B}\left(u,u\right)}\left(e_1\right)}{e_j}=\hin{\mathbf{B}\left(e_1,e_1\right)}{\mathbf{B}\left(e_1,e_j\right)}=0,\quad \forall j>1.
\end{align*}
Here $\mathbf{A}^{\nu}$ is the shape operator associated with the normal vector $\nu$. Thus,
\begin{align*}
    \mathbf{A}^{Ju}\mathbf{A}^{Ju}\left(e_1\right)= \mathbf{A}^{Je_1}\left(-J\mathbf{B}\left(e_1,e_1\right)\right)=\mathbf{A}^{\mathbf{B}\left(u,u\right)}\left(e_1\right)=\abs{\mathbf{B}\left(e_1,e_1\right)}^2e_1.
\end{align*}
We conclude that
\begin{align*}
    \mathbf{A}^{Ju}\left(e_1\right)=\pm\abs{\mathbf{B}\left(e_1,e_1\right)}e_1.
\end{align*}
 Consequently,
\begin{align*}
    \Theta(p)&=\max_{X\in S_pM}\abs{\mathbf{B}\left(X,X\right)}
    =\abs{\mathbf{B}\left(e_1,e_1\right)}=\pm\hin{\mathbf{A}^{Ju}\left(e_1\right)}{e_1}=\pm\sigma\left(e_1,e_1,e_1\right)\leq\max_{X\in S_pM}\sigma\left(X,X,X\right).
\end{align*}
\end{proof}
In the rest of this paper we  will use the equivalent description  \eqref{Theta} of $\Theta$.
\begin{lem}\label{lem:Theta}
$\Theta$ is a nonnegative Lipschitz function on $M$.
\end{lem}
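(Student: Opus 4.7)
The plan is to prove the two assertions separately. Nonnegativity is immediate from the characterization \eqref{Theta}: since $\sigma$ is a symmetric $3$-tensor, $\sigma(-X,-X,-X)=-\sigma(X,X,X)$, so for each $p$ at least one of $X$, $-X$ makes $\sigma(X,X,X)\geq 0$; taking the supremum over $S_pM$ gives $\Theta(p)\geq 0$.

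For the Lipschitz assertion, the idea is that $\Theta$ is a pointwise supremum of a family of smooth functions with uniformly controlled first derivatives, indexed by unit tangent vectors. Given $p,q\in M$, I would join them by a minimizing geodesic $\gamma\colon[0,d]\to M$ with $d=d(p,q)$, pick a maximizer $X_q\in S_qM$ with $\Theta(q)=\sigma(X_q,X_q,X_q)$, and parallel transport $X_q$ backwards along $\gamma$ to obtain a (pointwise unit) vector field $X(t)$ along $\gamma$ with $X(d)=X_q$. Setting $X_p:=X(0)\in S_pM$, the definition of $\Theta(p)$ gives $\Theta(p)\geq\sigma_p(X_p,X_p,X_p)$, so
\begin{align*}
    \Theta(q)-\Theta(p)\leq\int_0^{d}\frac{d}{dt}\sigma\bigl(X(t),X(t),X(t)\bigr)\,dt=\int_0^d(\nabla_{\dot\gamma}\sigma)(X,X,X)\,dt,
\end{align*}
where the second equality uses $\nabla_{\dot\gamma} X\equiv 0$. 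Because $\abs{X}=1$ and $\abs{\nabla\sigma}$ is bounded on the compact manifold $M$, the integrand is bounded by $C:=\max_M\abs{\nabla\sigma}$, yielding $\Theta(q)-\Theta(p)\leq C\,d(p,q)$. Swapping the roles of $p$ and $q$ gives the reverse bound, and hence $\abs{\Theta(q)-\Theta(p)}\leq C\,d(p,q)$.

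The only subtlety is that $\Theta$ is a maximum of smooth quantities and so can fail to be differentiable where the maximizer is not unique; this is why only Lipschitz regularity is expected. The uniformity of the derivative bound above (independent of the choice of $X_q$) is precisely what converts the nonsmooth ``max'' into a Lipschitz estimate. No deeper ingredient beyond compactness of $M$ and smoothness of $\sigma$ is required.
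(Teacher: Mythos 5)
Your proof is correct and takes essentially the same route as the paper's: parallel-transport the maximizing unit vector along a minimizing geodesic joining the two points and bound the derivative of $t\mapsto\sigma\left(X(t),X(t),X(t)\right)$ by $\max_M\abs{\nabla\sigma}$, which is finite by compactness. The only cosmetic differences are that the paper uses the mean value theorem where you integrate, and it does not bother with nonnegativity (immediate from the original definition of $\Theta$ as a norm, or from your odd-symmetry observation).
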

\begin{proof}
It suffices to prove that for every $p_1, p_2\in M$,
\begin{align*}
    \Theta(p_1)\leq\Theta(p_2)+\max_{x\in M}\abs{\nabla\sigma}\mathrm{dist}(p_1,p_2).
\end{align*}
Choose a geodesic $\gamma:[0,\rho]\To M$ connecting $p_1$ and $p_2$, i.e., $\gamma(0)=p_1,\gamma(1)=p_2$, where $\rho=\mathrm{dist}(p_1,p_2)$. Assume
\begin{align*}
    \Theta(p_1)=\sigma(e,e,e),
\end{align*}
where $e\in S_{p_1}M$. We can extend $e$ to a tangent unit vector field $e(t)\in T_{\gamma(t)}M$ along $\gamma(t)$ by parallel transport. Consider a function $f(t)=\sigma\left(e(t),e(t),e(t)\right)$, then
\begin{align*}
    \Theta(p_1)-\Theta(p_2)\leq f(0)-f(\rho)=f'(t_0)\rho=\left(\nabla_{\dot\gamma\left(t_0\right)}\sigma\right)\left(e\left(t_0\right),e\left(t_0\right),e\left(t_0\right)\right)\rho,
\end{align*}
where $t_0\in(0,\rho)$. Therefore,
\begin{align*}
    \Theta(p_1)-\Theta(p_2)\leq \max_{ M}\abs{\nabla\sigma}\mathrm{dist}(p_1,p_2).
\end{align*}
\end{proof}

Choose $e_1(p)\in S_pM$ such that
\begin{align*}
    \Theta(p)=\sigma\left(e_1(p),e_1(p),e_1(p)\right).
\end{align*}
Apply the method of Lagrange multipliers to obtain
\begin{align*}
    \sigma\left(e_1(p),e_1(p),X\right)=\Theta(p)\hin{e_1(p)}{X},\quad\forall X\in T_pM.
\end{align*}
Hence we can choose an orthonormal basis $\set{e_1(p), e_2(p),\dotsc, e_n(p)}$ of $T_pM$ such that
\begin{align*}
    \sigma\left(e_1(p),e_i(p),e_j(p)\right)=\mu_i(p)\delta_{ij},
\end{align*}
where $\Theta(p)=\mu_1(p)\geq\mu_2(p)\geq\dotsm\geq\mu_n(p)$. Applying the maximum principle, one can check by definition directly that $\mu_1(p)\geq2\mu_2(p)$. We say that $\Theta(p)$ is of multiplicity one if \begin{align*}
    e\in S_pM,\ \sigma\left(e,e,e\right)=\sigma\left(e_1(p),e_1(p),e_1(p)\right)=\Theta(p)\quad\Longrightarrow\quad e=e_1(p).
\end{align*}

\begin{lem}\label{Theta2}
If $\mu_1(p_0)>2\mu_2(p_0)$ and  $\Theta(p_0)$ is of multiplicity one, then there is a unique smooth unit tangent vector field $e$ around a neighborhood $U\subset M$ of $p_0$ with $e(p_0)=e_1(p_0)$ such that
\begin{align*}
    \Theta(p)=\sigma\left(e(p),e(p),e(p)\right),\quad\forall p\in U.
\end{align*}
\end{lem}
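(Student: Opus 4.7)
The plan is to apply the implicit function theorem to the Lagrange-multiplier system that characterizes unit critical points of the cubic form $X\mapsto\sigma_p(X,X,X)$ on the unit sphere $S_pM$. First, pick a smooth local orthonormal frame $\set{E_1,\dotsc,E_n}$ on a neighborhood $V$ of $p_0$, write a tangent vector as $X=\sum_i x^iE_i(p)$, and let $\sigma_{ijk}(p)$ denote the smooth components of $\sigma$ in this frame. Define
\begin{align*}
G:V\times\mathbb{R}^n\times\mathbb{R}\To\mathbb{R}^n\times\mathbb{R},\qquad G(p,x,\lambda)\coloneqq\left(\left(\sigma_{ijk}(p)x^jx^k-\lambda x^i\right)_{1\leq i\leq n},\ \abs{x}^2-1\right),
\end{align*}
so that $G(p,x,\lambda)=0$ iff $x\in S_pM$ is a critical point of $\sigma_p(\cdot,\cdot,\cdot)$ with Lagrange multiplier $\lambda$; the discussion just preceding \autoref{Theta2} gives $G(p_0,e_1(p_0),\Theta(p_0))=0$.

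The crux of the argument is to verify that $D_{(x,\lambda)}G$ is invertible at $(p_0,e_1(p_0),\Theta(p_0))$. Working in the basis in which $\sigma(e_1,e_i,e_j)=\mu_i\delta_{ij}$, this partial derivative is the $(n+1)\times(n+1)$ matrix whose only nonzero entries are the diagonal values $\mu_1,2\mu_2-\mu_1,\dotsc,2\mu_n-\mu_1,0$ together with $-1$ at position $(1,n+1)$ and $2$ at position $(n+1,1)$. Expanding along the last column gives
\begin{align*}
\det D_{(x,\lambda)}G\bigl(p_0,e_1(p_0),\Theta(p_0)\bigr)=\pm\,2\prod_{i=2}^n\bigl(\mu_1(p_0)-2\mu_i(p_0)\bigr),
\end{align*}
which is nonzero: the hypothesis $\mu_1(p_0)>2\mu_2(p_0)$ together with the ordering $\mu_2(p_0)\geq\mu_i(p_0)$ for $i\geq 2$ makes every factor strictly positive. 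The implicit function theorem therefore yields a neighborhood $U\subset V$ of $p_0$ and smooth functions $x(p),\lambda(p)$ on $U$ with $(x(p_0),\lambda(p_0))=(e_1(p_0),\Theta(p_0))$ and $G(p,x(p),\lambda(p))\equiv 0$; setting $e(p)\coloneqq\sum_i x^i(p)E_i(p)$ gives the required smooth unit vector field as a critical-point branch.

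It remains to upgrade ``critical point'' to ``global maximizer'' and to obtain uniqueness. Let $\tilde e(p)\in S_pM$ realize $\Theta(p)$. By continuity of $\Theta$ (\autoref{lem:Theta}) any subsequential limit $\tilde e_\infty$ of $\tilde e(p)$ as $p\to p_0$ satisfies $\sigma_{p_0}(\tilde e_\infty,\tilde e_\infty,\tilde e_\infty)=\Theta(p_0)$, and the multiplicity-one hypothesis forces $\tilde e_\infty=e_1(p_0)$. Consequently $\tilde e(p)\to e_1(p_0)$, so for $p$ sufficiently close to $p_0$ the vector $\tilde e(p)$ lies in the local neighborhood where the IFT guarantees a unique critical-point branch; hence $\tilde e(p)=e(p)$ and $\Theta(p)=\sigma(e(p),e(p),e(p))$. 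Uniqueness of $e$ as a smooth vector field with $e(p_0)=e_1(p_0)$ is the local uniqueness statement of the IFT.

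The main obstacle I expect is the bordered-Jacobian computation in the middle paragraph: one has to identify the exact algebraic condition for non-degeneracy of the constrained Hessian of $\sigma(X,X,X)|_{S_pM}$ at the maximum and see that it matches precisely the strict inequality $\mu_1>2\mu_2$ supplied by the hypothesis. Once this is in place, the compactness-plus-IFT argument that selects the correct global branch using \autoref{lem:Theta} and multiplicity one is routine.
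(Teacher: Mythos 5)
Your proposal is correct and follows essentially the same route as the paper: both hinge on the nondegeneracy of the Lagrange--multiplier system at $\left(e_1(p_0),\Theta(p_0)\right)$, whose bordered Jacobian has determinant a nonzero multiple of $\prod_{i=2}^n\left(\mu_1-2\mu_i\right)$, followed by the inverse/implicit function theorem and a compactness-plus-multiplicity-one argument to identify the critical branch with the maximizer. The only (harmless) difference is in that last step: the paper derives an explicit angle bound $\cos t\leq 1/(1+\varepsilon)$ separating non-branch maximizers from $u(p)$, whereas you use the local uniqueness of the zero set of $G$ together with the continuity of $\Theta$ from \autoref{lem:Theta}; both are valid.
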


\begin{proof}

Consider a smooth map
\begin{align*}
    f:SM\times\mathbb{R}\To TM,\quad \left((p,u),\lambda\right)\mapsto\left(p, \lambda u-\sum_{j=1}^n\sigma\left(u,u,e_j(p)\right)e_j(p)\right)\eqqcolon\left(p,h(p,u,\lambda)\right),
\end{align*}
where $\set{e_j(p)}_{j=1}^n$ is an orthonormal basis of $T_pM$.
We compute
\begin{align*}
 \dif h\left(\theta_k\right)=&\lambda\theta_k-2\sigma\left(u,\theta_k,e_j(p)\right)e_j(p),\quad k=1,\dotsc,n-1,\\
    \dif h\left(\dfrac{\partial}{\partial\lambda}\right)=&u,
\end{align*}
where $\set{\theta_k}_{k=1}^{n-1}$ is an  orthonormal basis of $T_{u}\left(S_{p}M\right)$. Notice that $\set{e_2(p_0),\dotsc, e_n(p_0)}$ is an orthonormal basis of $T_{e_1(p_0)}\left(S_{p_0}M\right)$. The assumption gives us
\begin{align*}
    \det\left(\left.\dif f\right\vert_{\left((p_0,e_1(p_0)),\mu_1(p_0)\right)}\right)=\Pi_{j=2}^n\left(\mu_1(p_0)-2\mu_j(p_0)\right)\neq0.
\end{align*}
Apply the inverse function theory to conclude that  $f:\Omega\To f(\Omega)$ is a diffeomorphism for some neighborhood $\Omega\subset SM\times\mathbb{R}$  of $\left(p_0,e_1(p_0),\mu_1(p_0)\right)$. In particular, for some neighborhood $\hat U\subset M$ of $p_0$, there is $\lambda\in C^{\infty}\left(\hat U\right)$ and $e\in\Gamma\left(S\hat U\right)$ such that $\lambda(p_0)=\Theta(p_0), u(p_0)=e_1(p_0)$ and
\begin{align*}
    \sigma\left(u(p),u(p),e_j(p)\right)e_j(p)=\lambda(p) u(p),\quad p\in \hat U.
\end{align*}

Consider a second order tensor $\phi(\cdot,\cdot)=\sigma\left(u,\cdot,\cdot\right)$ on $\hat U$. Let $\lambda_1\geq\lambda_2\geq\dotsm\geq\lambda_n$ be the eigenvalues of $\phi$. One can check that $\lambda_1=\lambda$ with eigenvector $u$ and $\lambda_k(p_0)=\mu_k(p_0)$. Moreover, each $\lambda_k$ is local Lipschitz in $\hat U$.  Assume $\Theta(p)=\sigma\left(e,e,e\right)$ where $e=e(p)=\cos t u(p)+\sin tv\in T_pM$ and $v\perp u(p)$. By assumption, choose $\varepsilon>0$ such that 
\begin{align*}
    \Theta(p_0)=\lambda_1(p_0)>\left(2+\varepsilon\right)\lambda_2(p_0).
\end{align*}
If $\cos t<1$, we claim that
\begin{align*}
    \cos t\leq\dfrac{1}{1+\varepsilon},
\end{align*}
in a neighborhood $\tilde U\subset \hat U$ of $p_0$. In fact, by the definition of $\Theta$, we have
\begin{align*}
    \sigma\left(e,e,u(p)\right)=\Theta(p)\hin{e}{u(p)},
\end{align*}
which implies that
\begin{align*}
    \cos t\Theta(p)=\cos^2t\lambda(p)+\sin^2t\phi(v,v).
\end{align*}
Without loss of generality, assume $0<\cos t<1$, then
\begin{align*}
    \Theta(p)=\cos t\lambda(p)+\dfrac{\sin^2t}{\cos t}\phi\left(v,v\right).
\end{align*}
We get
\begin{align*}
    \Theta(p)\leq\dfrac{1+\cos t}{\cos t}\phi\left(v,v\right)\leq\dfrac{1+\cos t}{\cos t}\lambda_2(p).
\end{align*}
Thus $\lambda_2(p)>0$ and
\begin{align*}
    \cos t\leq\dfrac{\lambda_2(p)}{\Theta(p)-\lambda_2(p)}.
\end{align*}
By the continuity of $\Theta$ and $\lambda_2$, we prove the claim.

Now we claim that there is a neighborhood $U\subset\tilde U$ of $p_0$ such that $e(p)=u(p)$ for all $p\in U$. Otherwise, there are sequences $\set{p_n}\subset U, \set{v_n\in S_{p_n}M}, \set{t_n}\subset[0,2\pi]$ such that $e_n=\cos t_n u(p_n)+\sin t_n v_n$ satisfying
\begin{align*}
    \Theta(p_n)=\sigma\left(e_n,e_n,e_n\right),\quad\cos t_n\leq\dfrac{1}{1+\varepsilon},\quad\forall n.
\end{align*}
Without loss of generality, assume $\lim_{n\to\infty}q_n=p, \lim_{n\to\infty}t_n=t,\quad\lim_{n\to\infty}v_n=v$. Since $\Theta$ is continuous according to  \autoref{lem:Theta},  $u\coloneqq\lim_{n\to\infty}u_n=\cos t u(p_0)+\sin t v\neq e_1(p_0)$ satisfies
\begin{align*}
    \Theta(p_0)=\sigma\left(u(p_0),u(p_0),u(p_0)\right),
\end{align*}
which is a contradiction.
\end{proof}
Now we are prepared to give a proof of our main \autoref{thm1}.
\begin{proof}[Proof of \autoref{thm1}]
When $n=2$ we have $\abs{\mathbf{B}}^2=4\Theta^2$, therefore \eqref{con} is equivalent to $\abs{\mathbf{B}}^2\leq2$ and  we get the conclusion from \cite{YKM}.

We need to check the case $n\geq3$. Assume that $M$ is not totally geodesic and $\Theta$ achieves its maximum value at $p_0$. Choose an orthonormal basis $\set{e_i}_{i=1}^n$ of $T_{p_0}M$ such that
\begin{align*}
\mu_1=\sigma_{111}\left(p_0\right)=\Theta(p_0).
\end{align*}
One can check that
\begin{align*}
    \sigma_{11j}\left(p_0\right)=0,\quad j=2,\dotsc,n.
\end{align*}
Thus, we may assume that 
\begin{align*}
\sigma_{1jk}(p_0)=\mu_j\delta_{jk},\quad 1\leq j, k\leq n.
\end{align*}

For each $e_i$, choose a geodesic $\gamma:\left(-\varepsilon,\varepsilon\right)\To M$ with $\gamma(0)=p_0, \dot\gamma(0)=e_i$. We move  $e_1\in T_{p_0}M$ along the geodesic $\gamma(t)$ to $e_1(t)\in T_{\gamma(t)}M$ by parallel transport. Consider a function $f:\left(-\varepsilon,\varepsilon\right)\To\mathbb{R}$ defined by
\begin{align*}
    f(t)=\sigma\left(e_1(t),e_1(t),e_1(t)\right).
\end{align*}
Then $f(t)$ achieves its local maximum value at $t=0$. The maximum principle gives 
\begin{align*}
    0\geq f''(0)=\sigma_{111,ii}(p_0).
\end{align*}
Thus
\begin{align*}
\Delta\sigma_{111}(p_0)\leq0.
\end{align*}
Now applying the Simons' identity \eqref{eq:simons}, we have at $p_0$
\begin{align*}
0\geq&\left(n+1\right)\sigma_{111}+2\sigma_{1ab}\sigma_{1bc}\sigma_{1ca}-3\sigma_{1ab}\sigma_{abc}\sigma_{11c}\\
=&\left(n+1\right)\mu_1+2\sum_{j}\mu_{j}^3-3\mu_1\sum_{j}\mu_j^2\\
=&\left(n+1\right)\mu_1-\mu_1^3+2\sum_{j>1}\mu_{j}^3-3\mu_1\sum_{j>1}\mu_j^2.
\end{align*}

Without loss of generality, assume $\mu_1>0$.
Set
\begin{align*}
a_j=-\mu_{j},\quad j>1.
\end{align*}
Since $\sum_{j}\mu_j=0$, we get
\begin{align*}
0\geq&\left(n+1\right)\sum_{j>1}a_j-\left(\sum_{j>1}a_j\right)^3-2\sum_{j>1}a_{j}^3-3\sum_{j}a_j\sum_{k>1}a_k^2\\
=&\left(n+1\right)\sum_{j>1}a_j-6\left(\sum_{j>1}a_j\right)^3+12\sum_{i>1}a_i\sum_{j>k>1}a_{j}a_k-6\sum_{i>j>k>1}a_ia_ja_k.
\end{align*}
By Newton's inequality,
\begin{align*}
\sum_{i>j>k>1}a_ia_ja_k\leq\dfrac{2(n-3)}{3(n-2)}\dfrac{\left(\sum_{j>k>1}a_{i}a_j\right)^2}{\sum_{i>1}a_i},
\end{align*}
the equality holds if and only if
\begin{align*}
   \mu_2=\mu_3=\dotsm=\mu_n.
\end{align*}

We obtain
\begin{align}\label{eq:4.2}
0\geq&n+1-6\left(\sum_{j>1}a_j\right)^2+12\sum_{j>k>1}a_{j}a_k-\dfrac{4(n-3)}{n-2}\dfrac{\left(\sum_{j>k>1}a_{j}a_k\right)^2}{\left(\sum_{i>1}a_i\right)^2}.
\end{align}
Denoted by
\begin{align*}
\beta=\mu_1^2+3\sum_{j>1}\mu_j^2=4\left(\sum_{j>1}a_j\right)^2-6\sum_{j>k>1}a_ja_k,
\end{align*}
then (cf. \cite[Page 11]{LS18})
\begin{align*}
    \abs{\mathbf{B}}^2\geq\beta\geq\dfrac{n+2}{n-1}\mu_1^2.
\end{align*}
Then \eqref{eq:4.2} gives
\begin{align*}
0\geq&n+1+2\mu_1^2-2\beta-\dfrac{n-3}{9(n-2)}\left(4\mu_1-\dfrac{\beta}{\mu_1}\right)^2.
\end{align*}

If $n=3$, then
\begin{align*}
\beta\geq2+\mu_1^2.
\end{align*}
In general,
\begin{align*}
0\geq&n+1+2\mu_1^2-2\beta-\dfrac{n-3}{9(n-2)}\left(4\mu_1-\dfrac{\beta}{\mu_1}\right)^2\\
=&n+1+2\mu_1^2-2\beta-\dfrac{n-3}{9(n-2)}\left(16\mu_1^2-8\beta+\dfrac{\beta^2}{\mu_1^2}\right)\\
=&n+1+\dfrac{2(n+6)}{9(n-2)}\mu_1^2-\dfrac{2(5n-6)}{9(n-2)}\beta-\dfrac{n-3}{9(n-2)}\dfrac{\beta^2}{\mu_1^2}\\
=&n+1+\dfrac{2(n+6)}{9(n-2)}\left(\dfrac{5n-6}{2(n+6)}\dfrac{\beta}{\mu_1}-\mu_1\right)^2-\dfrac{3n}{2(n+6)}\dfrac{\beta^2}{\mu_1^2}.
\end{align*}
Notice that
\begin{align*}
\beta\geq\dfrac{n+2}{n-1}\mu_1^2,
\end{align*}
we know that
\begin{align*}
\dfrac{5n-6}{2(n+6)}\dfrac{\beta}{\mu_1}-\mu_1\geq\dfrac{n-1}{n+2}\dfrac{\beta}{\mu_1}-\mu_1\geq0.
\end{align*}
Therefore,
\begin{align*}
\dfrac{5n-6}{2(n+6)}\dfrac{\beta}{\mu_1}-\sqrt{\dfrac{27n(n-2)}{4(n+6)^2}\dfrac{\beta^2}{\mu_1^2}-\dfrac{9(n+1)(n-2)}{2(n+6)}}\leq\mu_1\leq\dfrac{n-1}{n+2}\dfrac{\beta}{\mu_1},
\end{align*}
which implies
\begin{align*}
\beta\geq\dfrac{n+2}{\sqrt{n}}\mu_1.
\end{align*}
We conclude that
\begin{align*}
\abs{\mathbf{B}}^2\geq\dfrac{n+2}{\sqrt{n}}\mu_1.
\end{align*}

Therefore under the assumption (\ref{con}), we must have $\abs{\mathbf{B}}^2=\frac{n+2}{\sqrt{n}}\mu_1$ at $p_0$ and
\begin{align}
\sigma_{1jk}\left(p_0\right)=\mu_{j}\delta_{jk},\quad j, k=1,\dotsc,n \label{eq:calabi_torus1}\\
\sigma_{ijk}\left(p_0\right)=0,\quad i, j, k>1. \label{eq:calabi_torus2}
\end{align}
Moreover $\mu_2=\dotsm=\mu_n=-\frac{1}{n-1}\mu_1<0$.

\begin{claim}
$\Theta$ is a constant on $M$.
\end{claim}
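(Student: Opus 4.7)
The plan is a connectedness argument: show that $\mathcal{U}:=\{p\in M:\Theta(p)=\max_M\Theta\}$ is both closed (immediate from continuity of $\Theta$) and open in $M$; connectedness then forces $\mathcal{U}=M$, making $\Theta$ constant. Fix any $q\in\mathcal{U}$. The entire algebraic chain of inequalities in the proof above applies verbatim with $q$ in place of $p_0$, so $\sigma(q)$ has the Calabi form \eqref{eq:calabi_torus1}--\eqref{eq:calabi_torus2}, with $\mu_1(q)=\Theta(q)$ and $\mu_2(q)=\dotsm=\mu_n(q)=-\mu_1(q)/(n-1)$. Factoring $(n+2)x^3-3x-(n-1)=(x-1)\bigl((n+2)x^2+(n+2)x+(n-1)\bigr)$ and noting that the quadratic factor has negative discriminant for $n\geq 3$ shows the maximising unit direction at $q$ is unique; combined with $\mu_1(q)>0>2\mu_2(q)$, \autoref{Theta2} provides an open neighbourhood $U\ni q$ and a smooth unit vector field $e_1$ on $U$ realising $\Theta(p)=\sigma(e_1(p),e_1(p),e_1(p))$. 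The pinching \eqref{con} together with $\abs{\mathbf{B}}^2(q)=\frac{n+2}{\sqrt n}\Theta(q)$ also forces $\abs{\sigma}^2$ to attain its maximum at $q$.

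The Lagrange-multiplier identity $\sigma(e_1,e_1,Y)=\Theta\hin{e_1}{Y}$ on $U$, equivalently $\sigma(Z,e_1,e_1)=0$ for every $Z\perp e_1$, is the crucial tool. It kills all frame-derivative terms upon the first differentiation, giving the clean formula $X\Theta=(\nabla_X\sigma)(e_1,e_1,e_1)$ on $U$. A second differentiation, together with Simons' identity \eqref{eq:simons} applied to $\Delta\sigma(e_1,e_1,e_1)$, expresses $\Delta\Theta$ in terms of $\sigma$, $\nabla\sigma$ and the covariant derivatives of $e_1$. At $q$, the Calabi form makes the directly verifiable algebraic identity $\sum_{i,j}\hin{\sigma_i}{\sigma_j}^2+\sum_{i,j}\abs{[\sigma_i,\sigma_j]}^2=(n+1)\abs{\sigma}^2$ hold, so Simons' formula \eqref{eq:simons1} for $\abs{\sigma}^2$, combined with $\Delta\abs{\sigma}^2(q)\leq 0$, forces $\nabla\sigma(q)=0$; substituting the Calabi values into Simons' identity \eqref{eq:simons} then gives $\Delta\sigma_{111}(q)=0$ as well.

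Together these identities yield a linear elliptic inequality of the form $\Delta\Theta+b^k\nabla_k\Theta\geq c\bigl(\Theta-\max_M\Theta\bigr)$ on $U$ with continuous coefficients $b^k,c$. Since $\Theta\leq\max_M\Theta$ on $U$ with equality at the interior point $q$, Hopf's strong maximum principle forces $\Theta\equiv\max_M\Theta$ throughout $U$, so $\mathcal{U}$ is open. The main obstacle is producing this elliptic inequality with bounded continuous coefficients on the whole neighbourhood $U$ — not merely at $q$, where every quantity takes its Calabi value — which requires careful book-keeping of the second covariant derivatives through the differentiated Lagrange identity $\sigma_{11a;j}=\omega^a_j(\mu_1-2\mu_a)$ (valid at $q$, with a continuous extension on $U$ guaranteed by $\mu_1-2\mu_a>0$ near $q$) and use of the pinching \eqref{con} to control the remaining non-leading terms.
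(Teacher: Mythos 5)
Your overall skeleton --- closed-plus-open via connectedness, rigidity of $\sigma$ at an arbitrary maximum point $q$, uniqueness of the maximising direction via the cubic $(n+2)x^3-3x-(n-1)$, \autoref{Theta2} to produce the smooth field $e_1$ on $U$, and the first-derivative identity $X\Theta=\left(\nabla_X\sigma\right)\left(e_1,e_1,e_1\right)$ --- is exactly the paper's, and your pointwise observations at $q$ are correct: $\abs{\sigma}^2$ is indeed maximal at $q$, the Calabi values (with $\mu_1=\frac{n-1}{\sqrt{n}}$ forced by $\beta=\frac{n+2}{n-1}\mu_1^2=\frac{n+2}{\sqrt{n}}\mu_1$) do satisfy $\sum_{i,j}\hin{\sigma_i}{\sigma_j}^2+\sum_{i,j}\abs{[\sigma_i,\sigma_j]}^2=(n+1)\abs{\sigma}^2$, whence $\nabla\sigma(q)=0$ and $\Delta\sigma_{111}(q)=0$. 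But the decisive step is missing: the strong maximum principle requires a differential inequality on an open set, and everything you establish beyond the general setup holds only at the single point $q$. You state the target inequality $\Delta\Theta+b^k\nabla_k\Theta\geq c\left(\Theta-\max_M\Theta\right)$ and explicitly defer its derivation as ``the main obstacle''; knowing $\nabla\sigma(q)=0$ and $\Delta\sigma_{111}(q)=0$ at one point cannot trigger Hopf's principle, so the openness of $\mathcal{U}$ is not proven as written.

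The paper closes this gap with no lower-order terms at all, and the key input is one you did not use: on all of $U$ (not merely at $q$) the field $e_1(p)$ is the maximising direction, so $\sigma_{11j}(p)=0$ for $j>1$ and $\sigma_1$ may be diagonalised, whence Simons' identity \eqref{eq:simons} gives $\Delta\sigma_{111}=(n+1)\mu_1+2\sum_j\mu_j^3-3\mu_1\sum_j\mu_j^2$ pointwise on $U$; running the algebraic chain of Section 4 in contrapositive form shows that $\Delta\sigma_{111}(p)<0$ would force $\abs{\mathbf{B}}^2(p)>\frac{n+2}{\sqrt{n}}\Theta(p)$, contradicting \eqref{con}, so $\Delta\sigma_{111}\geq0$ throughout $U$. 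Combined with the second-derivative computation $\Delta\Theta=\Delta\sigma_{111}+3\sum_{j>1}\left(\Theta-2\sigma_{1jj}\right)\hin{\nabla_{e_1}e_1}{e_j}^2$ and the elementary bound $\Theta\geq2\sigma_{1jj}$ (from $\mu_1\geq2\mu_2$), this yields $\Delta\Theta\geq0$ on $U$: genuine subharmonicity, to which the strong maximum principle applies directly. If you supply this step your argument coincides with the paper's; without it the proof is incomplete.
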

If this Claim is true, then the previous argument claims that the condition \eqref{eq:calabi_torus1} and \eqref{eq:calabi_torus2} hold everywhere. As an immediately consequence, $M$ is the Calabi torus (cf. \cite{LS18}) since $M$ is closed.

Now we prove the the above Claim as follows.
\begin{proof}[Proof of the Claim] Consider a nonempty subset $\Omega$ of $M$ defined by
\begin{align*}
    \Omega\coloneqq\set{p\in M: \Theta(p)=\mu_1}.
\end{align*}
Since $\Theta$ is continuous, we know that $\Omega$ is a closed subset of $M$. Then it suffices to prove that $\Omega$ is also an open subset of $M$ since $M$ is connected.

Firstly we claim that $\Theta(p_0)$ is of multiplicity one, i.e., the unit tangent vector $e\in T_{p_0}M$ with $\sigma\left(e,e,e\right)=\mu_1$ is unique. In fact, put $e=\sum_{i}x^ie_i$, then $\sum_jx^jx^j=1$ and
\begin{align*}
\mu_1=&\sigma\left(e,e,e\right)\\
=&x^ix^jx^k\sigma_{ijk}\\
=&x^1x^1x^1\mu_1+3x^1\sum_{j>1}x^jx^j\mu_j\\
=&x^1x^1x^1\mu_1-\dfrac{\mu_1}{n-1}x^1\left(1-x^1x^1\right).
\end{align*}
We must have $x^1=1$ and $e=e_1$. Therefore, by \autoref{Theta2} we can extend $e_1$ to a smooth tangent vector field still denoted by $e_1$ around a neighborhood $U$ of $p_0$, such that for all $p\in U$ we have
\begin{align*}
\Theta(p)=\sigma(e_1,e_1,e_1)(p).
\end{align*}

Secondly, we claim that $\Theta$ is subharmonic in $U$. In fact,  since
\begin{align*}
    \nabla_{e_j}\Theta=\sigma_{111,j}+3\sigma_{11i}\hin{\nabla_{e_j}e_1}{e_i}=\sigma_{111,j}=\sigma_{11j,1},
\end{align*}
and for $j>1$,
\begin{align*}
    0=&\nabla_{e_1}\sigma_{11j}\\
    =&\sigma_{11j,1}+2\sigma_{1kj}\hin{\nabla_{e_1}e_1}{e_k}+\sigma_{11k}\hin{\nabla_{e_1}e_j}{e_k}\\
    =&\sigma_{11j,1}+2\sum_{k>1}\sigma_{1kj}\hin{\nabla_{e_1}e_1}{e_k}+\sigma_{111}\hin{\nabla_{e_1}e_j}{e_1}\\
    =&\sigma_{11j,1}+2\sum_{k>1}\sigma_{1kj}\hin{\nabla_{e_1}e_1}{e_k}-\sigma_{111}\hin{\nabla_{e_1}e_1}{e_j}.
    \end{align*}
At a considered point $p\in U$, we may assume $\sigma_{1jk}=\sigma_{1jj}\delta_{jk}$ and $\nabla_{e_i}e_j=0$ for $i,j>1$. We get
\begin{align*}
    \nabla_{e_1}\Theta=&\sigma_{111,1},\\
    \nabla_{e_j}\Theta=&\left(\Theta-2\sigma_{1jj}\right)\hin{\nabla_{e_1}e_1}{e_j},\quad j>1,
\end{align*}
and
\begin{align*}
    \Delta\Theta=&\nabla_{e_j}\nabla_{e_j}\Theta-\nabla_{\nabla_{e_j}e_j}\Theta\\
    =&\left(\sigma_{111,11}+4\sum_{j>1}\sigma_{111,j}\hin{\nabla_{e_1}e_1}{e_j}\right)\\
    &+\sum_{j>1}\left(\sigma_{111,jj}
    +3\sum_{k}\sigma_{11k,j}\hin{\nabla_{e_j}e_1}{e_k}+\sum_{k}\sigma_{111,k}\hin{\nabla_{e_j}e_j}{e_k}\right)\\
    &-\sum_{j>1}\nabla_{e_j}\Theta\hin{\nabla_{e_1}e_1}{e_j}\\
    =&\Delta\sigma_{111}+3\sum_{j>1}\nabla_{e_j}\Theta\hin{\nabla_{e_1}e_1}{e_j}\\
    =&\Delta\sigma_{111}+3\sum_{j>1}\left(\Theta-2\sigma_{1jj}\right)\hin{\nabla_{e_1}e_1}{e_j}^2,
\end{align*}
where in the second equality we used the symmetry of the fourth order tensor $(\sigma_{ijk,l})$. By the assumption \eqref{con}, the previous argument implies that
\begin{align*}
    \Delta\sigma_{111}\geq0.
\end{align*}
Therefore we have  for all $p\in U$,
\begin{align*}
\Delta \Theta\geq3\sum_{i>1}\left(\Theta-2\sigma_{1ii}\right)\hin{\nabla_{e_1}e_1}{e_i}^2.
\end{align*}
One can check that $\sigma_{11i}(p)=0$ and $\Theta(p)\geq2\sigma_{1ii}(p)$ for all $i>1$ and all $p\in U$. Thus $\Theta$ is subharmonic in $U$. Since $\Theta$ achieved its local maximum value at $p_0\in U$, the strong maximum principle implies that $\Theta$ locally must be a constant in $U$. We conclude that $p_0$ is an interior point of $\Omega$. Thus $\Omega$ is an open subset of $M$. Therefore, $\Theta$ is a constant on $M$. 
\end{proof}

At the end let us show that when $n=3$ condition $(\ref{con})$ implies $\abs{\mathbf{B}}^2\leq2+\Theta^2$. Since $\frac{5}{2}\Theta^2\leq\abs{\mathbf{B}}^2\leq\frac{5}{\sqrt{3}}\Theta$, we have $\Theta\leq\frac{2}{\sqrt{3}}$, and $\Theta^2+2-\frac{5}{\sqrt{3}}\Theta=\left(\Theta-\frac{2}{\sqrt{3}}\right)^2+\frac{2}{3}-\frac{1}{\sqrt{3}}\Theta\geq0$. Therefore $\abs{\mathbf{B}}^2\leq\frac{5}{\sqrt{3}}\Theta$ implies $\abs{\mathbf{B}}^2\leq2+\Theta^2$. This completes the proof of  \autoref{thm1}.

\end{proof}

\appendix
\section{An application to Lagrangian submanifolds in the nearly K\"ahler \texorpdfstring{$\mathbb{S}^6$}{ unit six sphere}}
Here we  give a slight improvement of the main theorem in \cite{HYY} as follows, by the argument used in the proof of  \autoref{thm2}.

\begin{theorem}\label{thm3}
Let $M$ be a closed Lagrangian submanifold in the homogeneous nearly K\"ahler $\mathbb{S}^6$. Then  we have
\begin{align}\label{Integ1}
\int_{M}\abs{\mathbf{B}}^2\left(\abs{\mathbf{B}}^2-\frac{75}{56}-\frac{10}{7}\Theta^2\right)\geq0.
\end{align}
Moreover, the equality in \eqref{Integ1} holds if and only if $M$ is either the totally geodesic sphere, or the
Dillen-Verstraelen-Vrancken's Berger sphere (see  \cite[Theorem 5.1]{DVV}) which satisfies $\abs{\mathbf{B}}^2=\frac{75}{56}+\frac{10}{7}\Theta^2$ with $\abs{\mathbf B}^2\equiv\frac{25}{8}$ and $\Theta\equiv\frac{\sqrt{5}}{2}$.
\end{theorem}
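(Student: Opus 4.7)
The plan is to recycle the machinery in the proof of \autoref{thm2} almost verbatim, updating only the ambient-curvature constants to those appropriate for the nearly K\"ahler $\mathbb{S}^6$. A Lagrangian immersion $M^3\hookrightarrow\mathbb{S}^6$ with the homogeneous nearly K\"ahler structure is automatically minimal, and the cubic tensor $\sigma(X,Y,Z)=\hin{h(X,Y)}{JZ}$ is still totally symmetric. First I would record the Simons-type identity in this setting (cf.\ \cite{HYY}); it has the same shape as \eqref{eq:simons1} but with a different linear coefficient coming from the ambient sectional curvature together with the contributions of $G=\nabla J$. Specifically, I would use
\begin{equation*}
\tfrac12\Delta|\sigma|^2 = |\nabla\sigma|^2 + \tfrac{15}{4}|\sigma|^2 - \sum_{i,j}\hin{\sigma_i}{\sigma_j}^2 - \sum_{i,j}\abs{[\sigma_i,\sigma_j]}^2.
\end{equation*}
Since $\dim M=3$, the algebraic Weyl part of $\hat R$ vanishes, and the reduction used in \autoref{thm2} converts this into
\begin{equation*}
\tfrac12\Delta|\sigma|^2 = |\nabla\sigma|^2 + \tfrac{15}{4}|\sigma|^2 - 5\sum_{i,j}\hin{\sigma_i}{\sigma_j}^2 + |\sigma|^4.
\end{equation*}

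Next I would invoke the cubic-form estimate of \autoref{thm2} without change: choose $e_1$ realising $\Theta=\sigma_{111}$, diagonalise $\sigma_1$ on $e_1^\perp$, substitute $x=(\lambda_1+\lambda_2)^2$, $y=(\lambda_1-\lambda_2)^2$, $z=4(\mu_1^2+\mu_2^2)$, and take $\kappa=7/5$ to obtain
\begin{equation*}
5\sum_{i,j}\hin{\sigma_i}{\sigma_j}^2 - |\sigma|^4 \leq \tfrac{14}{5}\bigl(|\sigma|^2 - \tfrac{10}{7}\Theta^2\bigr)|\sigma|^2.
\end{equation*}
Rewriting $\tfrac{15}{4}|\sigma|^2 = \tfrac{14}{5}\cdot\tfrac{75}{56}|\sigma|^2$ and combining with the Simons identity yields the pointwise inequality
\begin{equation*}
\tfrac12\Delta|\sigma|^2 \geq |\nabla\sigma|^2 + \tfrac{14}{5}\bigl(\tfrac{75}{56}+\tfrac{10}{7}\Theta^2 - |\sigma|^2\bigr)|\sigma|^2.
\end{equation*}
Integrating over the closed $M$ kills the Laplacian and, since $|\mathbf{B}|^2=|\sigma|^2$, produces exactly~\eqref{Integ1}.

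For the equality discussion, saturation simultaneously forces $\nabla\sigma\equiv0$ and, inspecting the cubic estimate at $\kappa=7/5$ (where the coefficients of $y^2$ and $z^2$ are strictly positive), $y\equiv z\equiv0$; thus $\lambda_1=\lambda_2$ and $\mu_1=\mu_2=0$. A short algebraic check then gives $|\mathbf{B}|^2\equiv 25/8$ and $\Theta\equiv\sqrt{5}/2$. Feeding the resulting normal form for $\sigma$ into the classification of Lagrangian submanifolds with parallel second fundamental form in the nearly K\"ahler $\mathbb{S}^6$ (\cite{DVV}) singles out the Dillen--Verstraelen--Vrancken Berger sphere, completing the characterisation.

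The main obstacle is the first step: the careful derivation of the Simons identity in the nearly K\"ahler setting with the precise coefficient $15/4$ in front of $|\sigma|^2$. The extra terms coming from the non-integrability of $J$ (via the totally skew tensor $G$ with $\hin{G(X,Y)}{Z}+\hin{Y}{G(X,Z)}=0$) must combine with the ambient curvature contribution exactly so that no extra term involving $G$ survives on the right-hand side. Once that identity is in hand, the remainder of the proof is a cosmetic adaptation of the \autoref{thm2} calculation, and the equality analysis is essentially a classification lookup.
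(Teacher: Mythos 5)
Your overall strategy is the paper's: plug the three\-/dimensional cubic\-/form estimate from the proof of \autoref{thm2} into a Simons\-/type identity for the nearly K\"ahler setting. But the step you yourself flag as ``the main obstacle'' is where the proposal goes wrong, and the error is not cosmetic. The identity you record,
$\tfrac12\Delta\abs{\sigma}^2=\abs{\nabla\sigma}^2+\tfrac{15}{4}\abs{\sigma}^2-\sum\hin{\sigma_i}{\sigma_j}^2-\sum\abs{[\sigma_i,\sigma_j]}^2$,
is false as written. The Chern--do Carmo--Kobayashi identity gives the linear coefficient $3$ in front of $\abs{\mathbf{B}}^2$ when paired with the full gradient term $\abs{\nabla^{\bot}\mathbf{B}}^2=\abs{\nabla\sigma}^2$; with that coefficient your argument would only yield the constant $\tfrac{15}{14}$, not $\tfrac{75}{56}$. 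The correct mechanism is the opposite of your heuristic that ``no extra term involving $G$ survives'': because $\bar\nabla J\neq0$, the derivative $\nabla\sigma$ is \emph{not} totally symmetric, and the paper replaces it by its symmetrization $u$, for which $\abs{\nabla^{\bot}\mathbf{B}}^2=\abs{u}^2+\tfrac34\abs{\mathbf{B}}^2$. It is precisely this surviving $G$\-/term $\tfrac34\abs{\mathbf{B}}^2$ that upgrades $3$ to $\tfrac{15}{4}$ and hence $\tfrac{15}{14}$ to $\tfrac{75}{56}$. Since you only discard the gradient term afterwards, your final differential inequality happens to be correct, but you have neither derived the coefficient $\tfrac{15}{4}$ nor described correctly how it arises, and this coefficient is the entire content of the improved constant.

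The same confusion breaks your equality analysis. Saturation of the integral inequality forces $u\equiv0$, \emph{not} $\nabla\sigma\equiv0$: indeed $\abs{\nabla\sigma}^2=\abs{u}^2+\tfrac34\abs{\sigma}^2$, so $\nabla\sigma\equiv0$ would force $\mathbf{B}\equiv0$, i.e.\ there are no non\-/totally\-/geodesic Lagrangian submanifolds of the nearly K\"ahler $\mathbb{S}^6$ with parallel second fundamental form. Consequently the ``classification of Lagrangian submanifolds with parallel second fundamental form'' you invoke would return only the totally geodesic sphere and could never produce the Berger sphere, which satisfies $u\equiv0$ but $\abs{\nabla^{\bot}\mathbf{B}}^2=\tfrac34\abs{\mathbf{B}}^2>0$. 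The correct identification of the equality case goes through the analysis of \cite{HYY} (resting on the Dillen--Verstraelen--Vrancken classification \cite{DVV} of totally real $3$\-/dimensional submanifolds, not a parallel\-/$\mathbf{B}$ classification), combined with the algebraic rigidity $y=z=0$ that you do obtain correctly from the $\kappa=7/5$ estimate.
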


\begin{proof}
Here we only give a brief sketch. For more details please see \cite{HYY}. We identify $\mathbb{R}^7$ as the imaginary Cayley numbers.  The Cayley multiplication induces a cross product $``\times"$ on $\mathbb{R}^7$. The almost complex structure $J$ on $\mathbb{S}^6\subset\mathbb{R}^7$ is then given by
\begin{align*}
    JX\coloneqq x\times X,\quad\forall X\in T\mathbb{S}^6.
\end{align*}
Let $\bar\nabla$ be the Levi-Civita connection on $\mathbb{S}^6$, then $\left(\bar\nabla_XJ\right)X=0$ for all $X\in T\mathbb{S}^6$. Then $\omega_{ijk}=\hin{\left(\bar\nabla_{e_i}J\right)e_j}{Je_k}$ is the volume form of $M$. Since $M$ is Lagrangian, i.e., $JTM\subset T^{\bot}M$,  we have (\cite[Lemma 3.2]{SS})
\begin{align*}
\mathbf{B}\left(e_i,\left(\bar\nabla_{e_j}J\right)e_k\right)=J\left(\bar\nabla_{\mathbf{B}\left(e_i,e_j\right)}J\right)e_k+J\left(\bar\nabla_{e_j}J\right)\mathbf{B}\left(e_i,e_k\right),
\end{align*}
which implies that $M$ is minimal (cf. \cite[Theorem 1]{Ejiri}). We have the following Simons' identity (cf. \cite{CCK})
\begin{align*}
    \dfrac12\Delta\abs{\mathbf{B}}^2=\abs{\nabla^{\bot}\mathbf{B}}^2+3\abs{\mathbf{B}}^2-\sum_{\alpha,\beta=1}^3\hin{\mathbf{A}^{\nu_{\alpha}}}{\mathbf{A}^{\nu_{\beta}}}^2-\sum_{\alpha,\beta=1}^3\abs{\left[\mathbf{A}^{\nu_{\alpha}},\mathbf{A}^{\nu_{\beta}}\right]}^2.
\end{align*}
Here  $\set{\nu_{\alpha}}$ is a local orthonormal frames of $T^{\bot}M$.
Set
\begin{align*}
    \sigma_{ijk}=\hin{\mathbf{B}\left(e_i,e_j\right)}{Je_k},
\end{align*}
then $\sigma$ is a third order symmetric tensor. One can check that
\begin{align*}
    \sigma_{ijk,l}=\hin{\left(\nabla_{e_i}^{\bot}\mathbf{B}\right)\left(e_j,e_k\right)}{Je_l}.
\end{align*}
Introduce 
\begin{align*}
    u_{ijkl}\coloneqq&\dfrac14\left(\sigma_{ijk,l}+\sigma_{jkl,i}+\sigma_{kli,j}+\sigma_{lij,k}\right)\\
    =&\sigma_{ijk,l}-\dfrac14\left(\sigma_{jkm}\omega_{lim}+\sigma_{ikm}\omega_{ljm}+\sigma_{ijm}\omega_{lkm}\right).
\end{align*}
 One can check that $u$ is a fourth order symmetric tensor and $\sum_{i}u_{iij,k}=0$. By using the fact $\sigma_{ijk,l}=\sigma_{ijl,k}+\sigma_{ijm}\omega_{lkm}$, a direct calculation yields (cf. \cite[emma 4.4]{HYY})
\begin{align*}
    \abs{\nabla^{\bot}\mathbf{B}}^2=\abs{u}^2+\dfrac34\abs{\mathbf{B}}^2.
\end{align*}
We therefore obtain
\begin{align*}
\dfrac12\Delta\abs{\sigma}^2=&\abs{u}^2+\dfrac{15}{4}\abs{\sigma}^2-\sum_{i,j=1}^3\hin{\sigma_i}{\sigma_j}^2-\sum_{i,j=1}^3\abs{[\sigma_i,\sigma_j]}^2,
\end{align*}
where $\sigma_i=\left(\sigma_{ijk}\right)_{1\leq j,k\leq n}$. Then, similarly with the proof of \autoref{thm2}, we obtain
\begin{equation*}
\frac12\Delta\abs{\mathbf{B}}^2\geq
\frac{14}5\left(\frac{75}{56}+\dfrac{10}{7}\Theta^2-\abs{\mathbf{B}}^2\right)\abs{\mathbf{B}}^2.
\end{equation*}
The rest of the proof follows from that of \cite{HYY}.
\end{proof}

\quad

\biboptions{longnamesfirst,sort&compress}

\end{document}